\long\def\@caption#1[#2]#3{
  \par
  \addcontentsline{\csname ext@#1\endcsname}{#1}%
    {\protect\numberline{\csname the#1\endcsname}{\ignorespaces #2}}%
  \begingroup
    \@parboxrestore
    \if@minipage
      \@setminipage
    \fi
   \normalsize
    \footnotesize
    \@makecaption{\csname fnum@#1\endcsname}{\ignorespaces #3}\par
  \endgroup}
 \theoremstyle{plain}
\newcommand\DD{{\Bbb D}}
\newcommand{\ee}{\end{equation}}
\newtheorem{definition}{Definition}[section]
\newtheorem{theorem}[definition]{Theorem}
\newtheorem{lemma}[definition]{Lemma}
\newtheorem{remark}[definition]{Remark}
\newtheorem{proposition}[definition]{Proposition}
\newtheorem{corollary}[definition]{Corollary}
\newtheorem{pr1}[definition]{}
\newcommand\CC{{\Bbb C}}
\newcommand\TT{{\Bbb T}}
\begin{document}

\title{ On  Character Amenability of   Banach Algebras
}
   \author{Ahmadreza Azimifard
   \date{}
   }

\maketitle

\begin{abstract}
            Associated to a nonzero homomorphism $\varphi$ of a Banach algebra  $A$,
            we regard  special functionals, say $m_\varphi$,  on certain subspaces of $A^\ast$
            which provide  equivalent statements  to the
            existence of a  bounded right approximate identity in the corresponding maximal ideal in $A$.
            For instance, applying a fixed point theorem yields an equivalent statement to the existence of a $m_\varphi$
            on $A^\ast$;  and,  in addition  we  expatiate
            the case that if a functional $m_\varphi$ is unique, then $m_\varphi$
             belongs to the topological centre of   the bidual algebra $A^{\ast\ast}$.
            An example of a function algebra, surprisingly,  contradicts  a conjecture that  a
         Banach algebra $A$ is amenable if $A$ is  $\varphi$-amenable in every character $\varphi$ and if  functionals
         $m_\varphi$ associated to the characters $\varphi$ are uniformly bounded.
            Aforementioned  are also elaborated   on  the direct sum  of two given Banach algebras.
\end{abstract}

\begin{tabbing}
{\em  AMS  2000 Subject Classification. 43A20}
\end{tabbing}

\noindent{{\em Keywords.} Banach algebra, topological centre,
amenability.}


\section{Introduction}

            A. T. M.  Lau introduced and studied
            the left  amenability of certain Banach    algebras
            \cite{Lau83}. In the  special cases
            he pointed out that for a locally compact group $G$, the
            left amenability of $L^1(G)$, the  group algebra,  and the left amenability of $M(G)$, the measure algebra, are equivalent
             to
            the amenability of $G$. The latter, due to B. Johnson \cite{Johnson},
            is  also equivalent to the amenability of $L^1(G)$  as well as  the existence of
              a bounded right approximate identity in the maximal ideal of
              $L^1(G)$, consisting of functions with vanishing integral.
             Hence, the left amenability of $L^1(G)$ (with respect to the identity character)
             is equivalent to the amenability of $L^1(G)$.
            We may refer the reader e.g.  to \cite{Dales, Greenleaf, Pat88} and  \cite{ Runde} for extensive treatments
            of various notions of amenability.

             One may now
            pose a natural question of whether the  above statements
            remain valid for a Banach algebra with a nonzero homomorphism on
            it.


          In \cite{thesis}  we initiated and studied the notion of (nontrivial) character (left) amenability for
            a commutative hypergroup algebra, and, among other things,  we showed that a hypergroup algebra is
            character (left)
            amenable if and only if its  maximal ideal associated  to the character has a bounded approximate
            identity. Moreover, with a large family of examples of hypergroups  we revealed  that
             this kind of amenability of hypergroup algebras depends heavily on the
              asymptotic behavior of both Haar measures of hypergroups
               and characters. In particular,
               the algebra may in general fail to be even weakly amenable \cite{thesis, Ska92}.
               So, we may thence, in general, infer a negative answer to the anterior question.

               In this work we   aim  to  develop   a  notion of character amenability for Banach
               algebras, which,  in particular,  we generalize
               major results in \cite{Lau83}. The article is
               organized as follows:

               Section \ref{preliminaries} briefly presents  pertinent notions and notations  on Banach
               algebras, whereas
             Sections \ref{amenability},  \ref{sub.amenability} and
             \ref{e.acc.} contain main results and examples.
                        Let $A$  denote a   Banach algebra
                        and $Hom(A, \CC)$  the set of all nonzero homomorphisms (characters) from $A$
                         into $\CC$.
                      Suppose   $\varphi\in Hom(A, \CC)$ and $I_\varphi(A)$
           designates  the maximal ideal in $A$ determined  by $\varphi$.
           Theorems \ref{theorem.1},   \ref{T.4.1.2},  and \ref{Coro.2}
           provide  equivalent statements to the
            $\varphi$-amenability of $A$.
                        Additionally, Theorem \ref{Coro.2} also utters that
                       $A$   is $\varphi$-amenable with a bounded right approximate identity
                       if and only if
                       $I_\varphi(A)$ has a bounded right approximate identity.
           If $A$ is commutative, regular,    and  semisimple,  and $\{\varphi\}$ is a spectral set,
            then by Proposition \ref{L.7}
           every  $\varphi$-derivation on $A$ is zero.
                                The $\varphi$-left amenability of certain subspaces of $A^\ast$ with a
                                unique $\varphi$-mean is   elaborated  in
                                 Theorem \ref{T.6} and Remark \ref{R.2}.

                            In Section \ref{sub.amenability} we consider the Banach algebra $A\oplus_{\phi} B$
                            of direct
                             sum    of given Banach algebras $A$ and $B$, where $\phi \in Hom(B, \CC)$.
                             In this section,  we first show that
                             $(A\oplus_\phi B)^{\ast\ast} \cong A^{\ast\ast}\oplus_\phi B^{\ast\ast}$
                             (Proposition \ref{t.6}), and then    Theorem \ref{t.7}
                             indicates the isomorphism
                             $Z\left((A\oplus_\phi B)^{\ast\ast}\right)
                             \cong Z(A^{\ast\ast})\oplus_\phi Z(B^{\ast\ast})$ of topological centres.  Proposition \ref{1.33}
                              determines
                              $Hom(A\oplus_\phi B, \CC)\cong Hom(A, \CC)\times \{\phi\}$, and Theorem
                               \ref{1.34}  affirms  that
                              $A\oplus_\phi B$ is $\varphi\times \phi$-amenable if and only if $A$ is
                               $\varphi$-amenable.

                               Section \ref{e.acc.} concludes the
                               paper with examples of
                               Banach  function algebras whose  $\varphi$-
                               amenability depends on
                                  characters $\varphi$.

 \section{Preliminaries}\label{preliminaries}
            For a linear space $X$ and a function $f$ on $X$, the value of $f$
            at $x\in X$
             is denoted  by $f(x)$, $\langle f,x\rangle$
            or $\langle x, f\rangle $. The usual dual and bidual spaces of $X$ are specified by $X^\ast$ and $X^{\ast\ast}$,
            respectively; and, $\pi$ designates  the canonical embedding of  $X$ into
                       $X^{\ast\ast}$.

                        Let $A$ be a Banach algebra and $X$ a subspace of   $A^\ast$. For
                         $a\in A$ and $f\in X$ define
                         $a\cdot f$ and $f \cdot a$ of $A^\ast$ by
                         \[\langle f\cdot a, b\rangle=
                         \langle f, ab\rangle, \hspace{.5cm} \langle a\cdot f, b \rangle =\langle f, ba\rangle, \hspace{1.cm}(b\in A).\]
                $X$ is called  $A$-left (resp. right) invariant if $X\cdot A\subseteq X$ (resp. $A\cdot X\subseteq X$);
                it is called  $A$-invariant if it is
                 $A$-left and right invariant. For $m\in X^\ast$ and $f\in X$ define $m\cdot f\in A^\ast$ by
                 \[\langle m\cdot f,a\rangle =\langle m,f\cdot a\rangle, \hspace{1.cm} (a\in A).\]
                 When  $X=A^\ast$, $X$ is a Banach  $A$-bimodule, and $A^{\ast\ast}$ with the  following  Arens product is
                 a Banach algebra \cite{BonDun73}:
                 \[\langle n\cdot m,f\rangle=\langle n,m\cdot f\rangle, \hspace{1.cm} (m,n\in A^{\ast\ast}, f\in A^\ast).\]
                 Let $Hom(A, \CC)$ denote   the set  of all nonzero homomorphisms from  $A$ into $\CC$. For every
                                $\varphi\in Hom(A, \CC)$, we have $\pi(\varphi)\in Hom(A^{\ast\ast}, \CC)$ \cite{Civ},  and
                                 $I_\varphi(A):=\{a\in A: \varphi(a)=0\}$
                                 is a maximal ideal in $A$.  Further, if $A$ is commutative, then $Hom(A,
                                 \CC)$ is called
                                   the character space of  $A$ and   will be denoted by
                                 $\Phi(A)$. Observe that $\Phi(A)$
                                is a locally compact
                                Hausdorff space with the $w^\ast$-topology,
                                   and every maximal ideal of $A$ is given by $I_\varphi(A)$,
                                for some $\varphi\in \Phi(A)$  \cite{BonDun73}. A net $\{e_i\}_i$ in $A$ ($\|e_i\|<M$ for some $M>0$)
                                is called a  bounded right (resp. left) approximate identity
                                  if $\|ae_i-a\|\rightarrow 0$ (resp. $\|e_ia-a\|\rightarrow 0$ )
                                as $i\rightarrow \infty$,
                                 for every $a\in A$. It is called  a bounded approximate identity, if it   is
                                   a bounded left and right  approximate identity.

\begin{definition}
\emph{Let $A$ be a  Banach algebra and  $X$ a linear, closed,
$A$-left invariant subspace
                 of $A^\ast$ such that $Hom(A, \CC)\subseteq X$.  \;
                 $X$ is called   $\varphi$-left amenable if
  there exists  a  $m_\varphi\in X^{\ast}$    with the
   following properties:}
\begin{itemize}
    \item [\emph{(i)}] $m_\varphi (\varphi)=1$
    \item [\emph{(ii)}] $m_\varphi(f\cdot a)=\varphi(a)m_\varphi(f),  \hspace{.3cm}
    \emph{for every } f\in X \emph{ and }a \in A.$
\end{itemize}
 \end{definition}
Plainly  the dual space $A^\ast$ is linear, closed, and
$A$-invariant. A Banach algebra $A$ is called $\varphi$-amenable if
$A^\ast$ is $\varphi$-left amenable.
%
%

Let
   $B(A^\ast)$ denote the space of  bounded linear operators of $A^\ast$ into $A^\ast$.
   A net of operators $\{T_i\}$ in $B(A^\ast)$ is said to be converges to $T$ in  the $w^\ast$-operator
   topology,
    denoted by $w^\ast o t$ , if and only if for every $a\in A$, $T_i a\rightarrow Ta$ in  the
    $w^\ast$-topology on $A^\ast$,  as $i\rightarrow \infty$.
 %
 As  shown  in \cite{kadison},  the unit ball of $B(A^\ast)$ is compact in the $w^\ast ot$.
 \\[-1.cm]
\section{$\varphi$-Left Amenability of   Subspaces of $A^\ast$}\label{amenability}
\begin{theorem}\label{theorem.1}
\emph{Let $A$ be a Banach algebra and  $X$ a linear, closed,
$A$-left invariant subspace
 of $A^\ast$ such that $Hom(A, \CC)\subseteq X$. Suppose
$P:A^\ast\rightarrow X$ is a continuous linear map  such that $P(Hom(A, \CC))\subseteq Hom(A, \CC)$
and $P(f\cdot a)=P(f)\cdot a$, for every $f\in A^\ast$ and $a\in A$.
Then $X$ is $\varphi$-left amenable if and only if $A$ is $\varphi$-amenable.}
\end{theorem}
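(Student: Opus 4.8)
The plan is to prove the two implications separately, constructing the required functional explicitly in each case; only one direction uses the hypotheses on $P$.

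For the easy direction, suppose $A$ is $\varphi$-amenable, so there is some $n_\varphi\in A^{\ast\ast}$ with $n_\varphi(\varphi)=1$ and $n_\varphi(f\cdot a)=\varphi(a)n_\varphi(f)$ for all $f\in A^\ast$ and $a\in A$. I would simply restrict, setting $m_\varphi:=n_\varphi|_X\in X^\ast$. Since $\varphi\in Hom(A,\CC)\subseteq X$, property (i) is immediate; and since $X$ is $A$-left invariant, $f\cdot a\in X$ whenever $f\in X$, so property (ii) is inherited verbatim from the corresponding identity for $n_\varphi$. No properties of $P$ are needed here.

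For the converse, suppose $X$ is $\varphi$-left amenable with functional $m_\varphi\in X^\ast$. Here the map $P$ does the work: I would define $n_\varphi:=m_\varphi\circ P$, which is a bounded functional on $A^\ast$, hence an element of $A^{\ast\ast}$, because $P$ is continuous and $m_\varphi\in X^\ast$. To verify property (ii), for $f\in A^\ast$ and $a\in A$ I would compute $n_\varphi(f\cdot a)=m_\varphi(P(f\cdot a))=m_\varphi(P(f)\cdot a)$ using the intertwining hypothesis $P(f\cdot a)=P(f)\cdot a$; since $P(f)\in X$ and $X$ is $A$-left invariant, the invariance of $m_\varphi$ on $X$ then gives $m_\varphi(P(f)\cdot a)=\varphi(a)m_\varphi(P(f))=\varphi(a)n_\varphi(f)$, as required.

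The subtle point, and what I expect to be the main obstacle, is property (i), namely $n_\varphi(\varphi)=m_\varphi(P(\varphi))=1$. This requires knowing that $P(\varphi)=\varphi$, not merely that $P(\varphi)$ is some character. The key observation is that every character is an eigenvector for the right action: a direct computation from the definition gives $\varphi\cdot a=\varphi(a)\varphi$ for all $a\in A$. Writing $\psi:=P(\varphi)$, which lies in $Hom(A,\CC)$ by hypothesis on $P$, I would apply $P$ to the identity $\varphi\cdot a=\varphi(a)\varphi$ and invoke $P(f\cdot a)=P(f)\cdot a$ to obtain $\varphi(a)\psi=\psi\cdot a=\psi(a)\psi$ for every $a\in A$; since $\psi\neq 0$, this forces $\psi(a)=\varphi(a)$ for all $a$, that is, $P(\varphi)=\varphi$. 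Hence $n_\varphi(\varphi)=m_\varphi(\varphi)=1$, completing the argument. Once this eigenvector identity is in hand the converse is routine, so the crux of the proof is recognizing that the intertwining property of $P$ pins down $P(\varphi)$ exactly.
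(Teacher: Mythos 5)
Your proof is correct and follows essentially the same route as the paper: the converse direction is the trivial restriction of a $\varphi$-mean on $A^\ast$ to $X$, and the substantive direction composes $m_\varphi$ with $P$, with the key step being that the intertwining relation forces $P(\varphi)\cdot a=\varphi(a)P(\varphi)$ and hence $P(\varphi)=\varphi$. You actually spell out this last identification more carefully than the paper does, but the argument is the same.
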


\begin{proof}
                               Let $X\subset A^\ast$ be $\varphi$-left amenable.
                                    Then there
                                    exists a $m_\varphi\in X^\ast$ such that $m_\varphi(\varphi)=1$
                                    and $m_\varphi(f\cdot a)=\varphi(a)m_\varphi(f)$,  for every $f\in X$
                                    and $a\in A$. Since  $P(\varphi)\cdot a=\varphi(a)P(\varphi)$ for
                                    every $a\in A$, we have $P(\varphi)=\varphi$. The continuity of $P$ implies that
                                     the functional
                                     $m'_\varphi:A^\ast\rightarrow \CC$, defined  by
                                    $m'_\varphi:=m_\varphi \circ P$,
                                    belongs     to $A^{\ast\ast}$, and
                                    $m'_\varphi(\varphi)=m_\varphi(P\varphi)=m_\varphi(\varphi)=1$.
                                    Moreover, for every $f\in A^\ast$ and $a\in A$, we may have
                                    \begin{align}\notag
                                     m'_\varphi(f\cdot a)&=m_\varphi\circ P(f\cdot a)\\
                                     \notag
                                     &=m_\varphi( P(f\cdot a))\\ \notag &=m_\varphi(P(f)\cdot a)\\
                                     \notag
                                     &=\varphi(a)m_\varphi(Pf)\\ \notag &=\varphi(a)m_\varphi\circ
                                     P(f).
                                     \end{align}
Hence, $m'_\varphi$ is a   $\varphi$-mean on $A^\ast$. The converse
of the theorem is trivial.

\end{proof}

\begin{pr1}{\bf{Examples and Remarks:}}\label{R.2}
\emph{
\begin{itemize}
    \item[(i)]
           Let $A$ be a Banach algebra with a  bounded approximate identity and  $X=AA^\ast$.
           We observe that $X$ is a  (proper) closed  submodule  of $A^\ast$ \cite{HewRos79II},
           and $Hom(A, \CC)\subset AA^\ast$.
           For a fixed $b\in A$ the map  $P_b:A^\ast\rightarrow X$ defined by $P_b(f)=b\cdot f$ is
           a continuous linear map  and  $P_b(f\cdot a)=P_b(f)\cdot a$,
           for every $a\in A.$ Then by the  previous theorem $X$ is $\varphi$-left
           amenable if and only if $A$ is
           $\varphi$-amenable.\\
           We remark  that two $\varphi$-means $m_\varphi$ and $m'_\varphi$ on $A^\ast$ are equal if and only if they are equal on
           $A^\ast A$, since
           \[\varphi(a)m_\varphi(f)=m_\varphi(f.a)=m'_\varphi(f.a)=\varphi(a)m'_\varphi(f),\hspace{.2in} \mbox{ for every }f\in A^\ast
                                                 \mbox{  and } a\in A.\]
 \item[(ii)]   Let $G$ be  a locally compact group  and $A = L^1(G)$ the group algebra.
                 Then $A^\ast\cong L^\infty(G)$ and
               $L^1(G)\ast L^\infty(G)\cong UC_r(G)$,   the algebra of bounded uniformly
                right continuous functions on $G$.
                By  part (i) and the previous theorem,  $UC_r(G)$ is $1$-amenable
                if and only if $G$ is amenable which is  already  known
                 e.g. in   \cite{Pat88}. It is worth noting that, $AP(G)$ [$WAP(G)$],
                 the Banach algebra of [weakly] almost periodic functions on $G$,  is
                  norm closed two sided translation invariant subspace of $L^\infty(G)$ and is
                 always 1-amenable even if $G$ is not amenable \cite[p.88]{Pat88}.
          \item[(iii)] Notice  that in the above theorem,   the subspace $X$ of $A$
                        need not to be an algebra.  For example, if $K$ is a locally
                        compact hypergroup, then $L^1(K)\ast L^\infty(K)\cong UC_r(K)$ may, in general, fail to be an algebra .
                        However, $UC_r(K)$ is 1-amenable if and only if $K$ is amenable \cite{Ska92}. And,
                        by part (i) and the preceding theorem,   $UC_r(K)$ is $\varphi$-amenable if and only if $L^1(K)$ is $\varphi$-amenable,
                        which is already known in   \cite{azimifard.Monath} for commutative hypergroups.
\end{itemize}
   }
 \end{pr1}


              \begin{theorem}\label{T.4.1.2}
              \emph{Let $A$ be a commutative Banach algebra  and
               $\varphi\in\Phi(A)$. Then
               $A$ is $\varphi$-amenable if and only if
               for every $f\in A^\ast$,
               $\lambda \varphi \in \overline{K(f)}^{ w^\ast}$
                for
                some $\lambda\in \CC$, where
                $K(f)=\{f\cdot a; \hspace{.1in}a\in A\}$ is considered with the $w^\ast$-topology.}
                         \end{theorem}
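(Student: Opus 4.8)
The plan is to reduce the statement to the defining property of a $\varphi$-mean and then run a weak-$\ast$ separation argument in $A^\ast$. The key structural facts I will lean on are that $A$ is a predual of $A^\ast$, so every weak-$\ast$ continuous functional on $A^\ast$ is implemented by an element $b\in A$ and Hahn--Banach separation against a weak-$\ast$ closed set produces such a $b$; and that commutativity of $A$ turns the one-sided module action into a two-sided one, since $\langle f\cdot b,a\rangle=\langle f,ba\rangle=\langle f,ab\rangle=\langle f\cdot a,b\rangle$ for all $a,b\in A$. Note also that $K(f)=\{f\cdot a:a\in A\}$ is the image of the linear map $a\mapsto f\cdot a$, hence a linear subspace of $A^\ast$, so $\overline{K(f)}^{\,w^\ast}$ is a weak-$\ast$ closed subspace.

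For the forward implication, assume $A$ is $\varphi$-amenable and let $m:=m_\varphi\in A^{\ast\ast}$ be a $\varphi$-mean. Fix $f\in A^\ast$ and set $\lambda:=\langle m,f\rangle$. If $\lambda=0$ there is nothing to prove, since $0=f\cdot 0\in K(f)$. If $\lambda\neq 0$, suppose toward a contradiction that $\lambda\varphi\notin\overline{K(f)}^{\,w^\ast}$. Separating the point $\lambda\varphi$ from this weak-$\ast$ closed subspace yields some $b\in A$ with $\langle f\cdot a,b\rangle=0$ for every $a\in A$ and $\langle \lambda\varphi,b\rangle\neq 0$. The first relation says $\langle f,ab\rangle=0$ for all $a$, whence by commutativity $\langle f\cdot b,a\rangle=\langle f,ba\rangle=\langle f,ab\rangle=0$ for all $a$, i.e. $f\cdot b=0$. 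Applying $m$ and invoking property (ii) of the mean gives $0=\langle m,f\cdot b\rangle=\varphi(b)\langle m,f\rangle=\lambda\,\varphi(b)=\langle\lambda\varphi,b\rangle$, which contradicts $\langle\lambda\varphi,b\rangle\neq 0$. Thus $\lambda\varphi\in\overline{K(f)}^{\,w^\ast}$ with the canonical value $\lambda=\langle m,f\rangle$, completing this direction.

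For the converse I must manufacture a $\varphi$-mean out of the family of per-$f$ weak-$\ast$ approximations. The hypothesis provides, for each $f$, a scalar $\lambda_f$ and a net in $A$ whose images $f\cdot a$ converge weak-$\ast$ to $\lambda_f\varphi$; testing against any $b$ with $\varphi(b)=1$ shows $\langle f,ab\rangle\to\lambda_f$, so the approximations should be run inside the affine slice $\{a\in A:\varphi(a)=1\}$, and it is this normalization that makes $\lambda_f$ genuinely informative rather than the degenerate value $0$. The plan is then: given a finite set $F\subseteq A^\ast$, a finite test set in $A$, and $\varepsilon>0$, combine the individual conditions by a convexity/minimax argument (passing to weak-$\ast$ closed convex combinations and invoking the bipolar theorem) to produce a \emph{single} $a\in A$ with $\varphi(a)=1$ realizing $|\langle f\cdot a-\lambda_f\varphi,\,b\rangle|<\varepsilon$ simultaneously; indexing by such data gives a net $(a_\nu)$ behaving like a Reiter net, $\|a\,a_\nu-\varphi(a)a_\nu\|\to 0$, and then any weak-$\ast$ cluster point $m$ of $\{\pi(a_\nu)\}$ in $A^{\ast\ast}$ satisfies $\langle m,\varphi\rangle=1$ and $\langle m,f\cdot a\rangle-\varphi(a)\langle m,f\rangle=\lim_\nu\langle f,(a-\varphi(a))a_\nu\rangle=0$, i.e. $m$ is a $\varphi$-mean. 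The main obstacle is precisely this assembly step: converting pointwise, a priori unbounded, per-$f$ weak-$\ast$ membership into a single net that works uniformly over $A$. This is where convexity and the normalization $\varphi(a)=1$ are indispensable—they are what let one upgrade ``weak-$\ast$ in the closure for each $f$'' to ``simultaneously small'', and what guarantee the cluster point is genuinely a mean rather than the zero functional.
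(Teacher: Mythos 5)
Your forward direction is correct and takes a genuinely different (and clean) route from the paper's: where the paper invokes Goldstine's theorem to exhibit $\lambda\varphi$, with $\lambda=\langle m_\varphi,f\rangle$, as a $w^\ast$-limit of a net in $K(f)$, you separate $\lambda\varphi$ from the $w^\ast$-closed subspace $\overline{K(f)}^{w^\ast}$ by an element $b\in A$ (the predual implementing all $w^\ast$-continuous functionals) and contradict $m_\varphi(f\cdot b)=\varphi(b)\lambda$. That half stands as written.

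The converse, however, has a genuine gap at exactly the point you flag as ``the main obstacle'' and then do not resolve. Convexity does not perform the assembly you need: if $a$ makes $f\cdot a$ close to $\lambda_f\varphi$ and $a'$ makes $g\cdot a'$ close to $\lambda_g\varphi$, then $f\cdot\bigl(ta+(1-t)a'\bigr)=t\,f\cdot a+(1-t)\,f\cdot a'$ and you have no control over the term $f\cdot a'$; the bipolar theorem does not supply it, so ``simultaneously small over a finite set of $f$'s'' is never actually achieved. The Reiter-type claim $\|aa_\nu-\varphi(a)a_\nu\|\to 0$ is likewise unjustified: a Mazur-type upgrade to norm convergence presupposes weak convergence already holding simultaneously for the relevant finite families, which is the very step in question. (There is also the degeneracy you mention but do not dispose of: $0=f\cdot 0\in K(f)$, so with $\lambda=0$ admitted the hypothesis is vacuous; the normalization $\varphi(a)=1$ has to be extracted from the hypothesis, not imposed.) The paper's mechanism for simultaneity is entirely different and is the real content of the proof: it works in the family $S=\{T_m:f\mapsto m\cdot f,\ m\in B'\}$, compact in the $w^\ast$-operator topology, and uses that the set $[A^\ast]_{(S,\varphi)}=\{f:\,T_mf=m(\varphi)f\ \text{for all }m\}$ is invariant under every $T_{m'}$. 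Thus if $T_m$ already sends $f_1,\dots,f_{n-1}$ into this set, composing with a further $T_{m'}$ chosen for $f_n$ preserves what was achieved, so $T_{m'\cdot m}\in\bigcap_{i=1}^{n}Z(f_i)$: the finite intersection property is obtained by \emph{composition of operators}, not by convex combination, and compactness then yields a single $P$ from which the mean $m(f)=m_\varphi(P(f))$ is built. Some such invariance-under-composition, or an equivalent fixed-point principle, is what your outline is missing.
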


\begin{proof}
                 Let $A$ be a  $\varphi$-amenable Banach
                 algebra  and   $m_\varphi\in A^{\ast\ast}$,  a $\varphi$-mean  on $A^\ast$.
                  By Goldstein's theorem \cite{Dunford}, there exists  $\{m_{(\varphi,i)}\}_i$
                     a net in $A$ such that
                     $\pi(m_{(\varphi,i)}) \overset{w^{\ast}}\longrightarrow m_\varphi$,
                     and hence
                     $\pi(m_{(\varphi,i)})\cdot a\overset{w^\ast}{\longrightarrow} m_\varphi\cdot a$,
                     for every $a \in A$, as $i\rightarrow \infty$. Thus
 %
\[\underset{i \rightarrow \infty}{\lim}\; \langle \pi(m_{(\varphi,i)})\cdot a, f\rangle
= \langle m_\varphi,f\cdot a\rangle =\varphi(a) \langle m_\varphi,f\rangle, \hspace{.2cm} \mbox{ for every }f\in A^\ast.   \]
By letting  $\langle m_\varphi,f\rangle=\lambda $, we find $\lambda
\varphi \in
\overline{{K}{(f)}}^{w^\ast  }$.

               In order to prove  the converse of the theorem,  let $S=\{T_m : \hspace{.1in}T_m: f\mapsto m\cdot f,  f\in A^{\ast},m \in B'\}$,
               where $B'$ denotes  the bidual space of the unit ball $B$ in
               $A$.
Since  the  unit Ball
               in $B(A^\ast)$  is compact \cite{kadison}, the set  $\overline{S}$,  the $(w^\ast ot)$ closure of
               $S$ in $B(A^\ast)$, is compact.
%
               Let  $f\in A^\ast$ and define $S_f:=\{T_m  f: \hspace{.1cm} m \in B'\}$ and
               $[{A^{\ast}}]_{(S,\varphi)}:=\{ f\in A^\ast:\hspace{.1cm}T_m f=m(\varphi) f, \text{ for every }  m\in B'\}.$
               We    observe  that
                              $\overline{S_f}^{w^\ast}\cap [{A^{\ast}}]_{(S,\varphi)}$ is nonempty for
               every $ f\in A^{\ast}$;
               in fact,  since $\lambda \varphi\in \{ T_{\pi(a)} f,\hspace{.1cm}  a
               \in B\overline{{\}}}^{w^\ast }$,   there exists
               $\{a_i\}$ a net in $B$ such that
               $T_{\pi(a_i)} f \overset{w^\ast}{\longrightarrow} \lambda \varphi$,  as
               $i\rightarrow \infty$.
               Since  $B'$ is
               $w^\ast$-compact (Banach Alaoglu's theorem),     we  may  let $m_{( f,\varphi)}$ be  a
               $w^\ast$-accumulation  point of  $\{\pi(a_i)\}_i$  in $B'$.
               Then
\begin{align}\notag
\langle m_{(f,\varphi)}\cdot f, b \rangle& =\langle m_{(f,\varphi)},f\cdot b
\rangle\\ \notag &=\underset{i\rightarrow \infty}{\lim} \langle \pi(a_i), f\cdot b\rangle
 =\lambda \langle \varphi, b\rangle, \hspace{.2cm}\text{ for every }  b\in A, \notag
\end{align}
i.e., $ T_{m_{(f,\varphi)}} f=\lambda\varphi$.  Thus
\[T_mT_{m_{(f,\varphi)}}(f)=T_m\left(T_{m_{(f,\varphi)}}f\right)
                           =\lambda T_m\varphi
                           =\lambda m\cdot \varphi
                           =\lambda m(\varphi)\varphi
                           =m(\varphi)T_{m_{(f,\varphi)}}f,\hspace{.1cm} \mbox{ for every }m\in B',\]
which implies that $T_{m_{(f,\varphi)}}\in[A^\ast]_{(S,\varphi)}$.
 For any $f\in A^\ast$,
  let $Z( f)=\left\{T_m :T_mf\in
[{A^{\ast}}]_{(S,\varphi)}\right\}$. The subsets
 $Z( f)$ of $S$ are closed and have   the finite
intersection property.
 To verify  this, suppose that   $\{ f_1,
f_2,... f_n\}$ is   any finite subset of $A^{\ast}$ and $T_{m}
\in \bigcap _{i=1}^{n-1} Z( f_i)$,  then
$T_{m} f_i \in [{A^{\ast}}]_{(S,\varphi)} $. Since
                     $\overline{S_{T_mf_n}}^{w^\ast}\cap [{A^{\ast}}]_{(S,\varphi)} \not= \emptyset $,
                      there exists a
                      $m'\in B'$ such that $T_{m'}(T_m f_n)\in
[{A^{\ast}}]_{(S,\varphi)} $,  accordingly $ T_{m'\cdot m}\in \bigcap
_{i=1}^{n} Z  ( f_i).$
 Because  of $T_m f_i\in [{A^{\ast}}]_{(S,\varphi)}, 1\leq
i\leq n-1$, we have
\begin{align}\notag
T_nT_{m'\cdot m}  f_i &=T_{n}(T_{m'}T_mf_i) \\ \notag
&=m'(\varphi)T_nT_mf_i\\ \notag
 & =m'(\varphi)n(\varphi)T_{m} f_i\\ \notag
 & =n(\varphi)T_m'T_{m} f_i\\ \notag
 &=n(\varphi)T_{m'\cdot m}f_i, \hspace{1.cm} \mbox{ for every }n\in B', 1\leq i\leq
n-1.
  \end{align}
 The compactness of $\overline{S}$ yields
  $\underset{ f\in A^{\ast}}{\bigcap} Z( f)\not=\emptyset $; so,  let  $P$ be an
element of this intersection.
There exists $\{T_{m_i}\} \subseteq\underset{ f\in A^{\ast}}{\bigcap} Z( f)$  a net of operators such that
$T_{m_i}\overset{w^\ast op}{\longrightarrow }P$, as $i\rightarrow \infty$. Then
\[\langle P(T_af), b\rangle =\underset{i\rightarrow \infty}{\lim}\langle T_{m_i}(T_af), b\rangle =
\underset{i\rightarrow \infty}{\lim}\langle T_a(T_{m_i}f), b\rangle =
\varphi(a)\underset{i\rightarrow \infty}{\lim}\langle T_{m_i}f, b\rangle =\varphi(a)\langle P(f), b\rangle ,\]
for every $f\in A^\ast$, $a\in B$ and $b\in A$.
Now let $ m_\varphi \in A^{\ast\ast}$ such that
 $m_\varphi(P(\varphi))=1$, and define $m:A^{\ast}\longrightarrow \CC$ by   $m ( f)=m_{\varphi}(P( f))$.
   Because  of  $m (\varphi)=m_\varphi(P(\varphi))=1$, and
\begin{align}\notag
m(f\cdot a)&=m(T_{a}f)\\ \notag
&=m_\varphi(P(T_{a} f))\\
\notag   &=
\varphi(a)m_\varphi(P( f))\\
\notag &=\varphi(a)m(f),\hspace{.2cm} \text{ for every } a\in B \text{
and } f\in A^\ast,
\end{align}
 $m$ is a $\varphi$-mean on $A^\ast$.
\end{proof}

Let   $X$ be  a Banach $A$-bimodule and $\varphi\in Hom(A, \CC)$. Then
$X^\ast$  in a canonical way is  a Banach $A$-bimodule. A Banach $A$-bimodule  $X$
is called  a  Banach $\varphi$-left (resp. right) $A$-bimodule if the left (resp. right) module
  multiplication is   $a\cdot x=\varphi(a)x$ $\left( \mbox{resp. }x\cdot a=\varphi(a)x\right)$,  for all $a\in A$ and $x\in X$.
A continuous linear  map   $D:A\rightarrow X^\ast$ is called a  derivation if  $D(a b)=D(a)\cdot b+a\cdot D(b)$,  for every $a, b\in A$,
and  it is called an
 inner derivation if there exists  a
$\psi\in X^{\ast}$ such that $ D(a)=a\cdot \psi-\psi\cdot a$,  for every $a\in
A$. Here $ " \cdot " $ denotes the module multiplications.
A Banach algebra $A$ is called ($\varphi$-left)  amenable if for every   Banach ($\varphi$-left) $A$-bimodule $X$,
every derivation $D:A\rightarrow X^\ast$ is inner.
   In the case that $X$ is a Banach  $\varphi$-$A$-bimodule, i.e. $a\cdot x=x\cdot a=\varphi(a)x$,
   for all $a\in A$ and $x\in X$,
    the map $D$ has   the
     form $D(ab)=\varphi(a)D(b)+\varphi(b)D(a),$ for all $a, b\in A$,
    which is called a $\varphi$-derivation on $A$.

     The author had  already obtained  the proof of the following theorem 
      for
     commutative Banach algebras. After  writing  this paper,
      Professor E. Kaniuth   kindly  brought   the preprint
       \cite{l.k.p} to my attention, in which   the theorem  has been
    similarly and  simultaneously  proved  in the general case. So, the proof of the
     following theorem  is referred to \cite{l.k.p}.


\begin{theorem}\label{Coro.2}
\emph{\cite{l.k.p} Let $A$ be a  Banach algebra   and $\varphi\in Hom(A, \CC)$.
Then  $A$ is $\varphi$-amenable if and only if $A$ is $\varphi$-left amenable. Further,
$A$ has a bounded right approximate identity and   is $\varphi$-amenable if and only if
the maximal ideal $I_\varphi(A)$ has a bounded right approximate identity.
}
\end{theorem}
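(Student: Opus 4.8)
The statement is really two independent equivalences, and I would prove them by quite different means.

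\emph{First equivalence, easy direction.} The assertion ``$A$ is $\varphi$-amenable iff $A$ is $\varphi$-left amenable'' is the cohomological reformulation of the existence of a $\varphi$-mean. The implication ``$\varphi$-amenable $\Rightarrow$ $\varphi$-left amenable'' I would prove by averaging. Let $m_\varphi\in A^{\ast\ast}$ be a $\varphi$-mean, let $X$ be a Banach $\varphi$-left $A$-bimodule (so $a\cdot x=\varphi(a)x$), and let $D:A\to X^\ast$ be a derivation. On $X^\ast$ the dual actions are $\xi\cdot a=\varphi(a)\xi$ and $\langle a\cdot\xi,x\rangle=\langle\xi,x\cdot a\rangle$, so the derivation identity reads $D(ab)=a\cdot D(b)+\varphi(b)D(a)$. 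For $x\in X$ put $g_x\in A^\ast$, $g_x(a)=\langle D(a),x\rangle$; the identity rewrites in $A^\ast$ as $g_x\cdot a=g_{x\cdot a}+g_x(a)\varphi$. Applying $m_\varphi$ and using $m_\varphi(g_x\cdot a)=\varphi(a)m_\varphi(g_x)$ together with $m_\varphi(\varphi)=1$ gives $g_x(a)=\varphi(a)m_\varphi(g_x)-m_\varphi(g_{x\cdot a})$. Then $\xi\in X^\ast$ defined by $\langle\xi,x\rangle=-m_\varphi(g_x)$ (bounded since $\|g_x\|\le\|D\|\,\|x\|$) satisfies $D(a)=a\cdot\xi-\xi\cdot a$, so $D$ is inner.

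\emph{First equivalence, hard direction and the main obstacle.} For the converse one must feed the cohomological hypothesis a single well-chosen $\varphi$-left bimodule and extract a $\varphi$-mean from the splitting forced by innerness. The natural carrier is $A^\ast$ (or its quotient by $\CC\varphi$) with left action $a\cdot f=\varphi(a)f$ and the usual right action, whose dual is $A^{\ast\ast}$. The genuine difficulty here is that the obvious affine map $a\mapsto\pi(a)-\varphi(a)\Lambda$ into that dual is a derivation only when $\Lambda$ is a right identity, which is precisely the object one is trying to manufacture; so the module and the derivation must be set up more delicately, e.g.\ on the annihilator $N=\overline{\mbox{span}}\{f\cdot a-\varphi(a)f\}$, whose avoidance of $\varphi$ is exactly equivalent to $\varphi$-amenability. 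This is the substantial step, and I would invoke \cite{l.k.p} for it, exactly as the paper does.

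\emph{Second equivalence, setup.} For the ``Further'' statement I would first record the net form of $\varphi$-amenability: $A$ is $\varphi$-amenable iff there is a bounded net $(a_\alpha)$ in $A$ with $\varphi(a_\alpha)=1$ and $\|ba_\alpha-\varphi(b)a_\alpha\|\to0$ for every $b\in A$. The backward implication is immediate, since a $w^\ast$-cluster point of $(\pi(a_\alpha))$ is a $\varphi$-mean, just as in the proof of Theorem \ref{T.4.1.2}; the forward one is Goldstine's theorem followed by a Mazur convex-combination argument to upgrade weak to norm convergence. Now fix $e\in A$ with $\varphi(e)=1$, so that $A=I\oplus\CC e$ with $I:=I_\varphi(A)$, and note that for any right approximate identity $(e_i)$ of $A$ one has $\varphi(e_i)\to1$.

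\emph{Second equivalence, both directions.} If $I$ has a bounded right approximate identity $(u_j)$, then $f_j:=e+u_j-eu_j$ is bounded, $\varphi(f_j)=1$, and $af_j-a=(a-ae)u_j-(a-ae)\to0$ for all $a\in A$ (using $a-ae\in I$), so $(f_j)$ is a bounded right approximate identity for $A$; moreover $a_j:=e-eu_j$ is bounded with $\varphi(a_j)=1$ and $ba_j-\varphi(b)a_j=d(b)-d(b)u_j\to0$, where $d(b):=be-\varphi(b)e\in I$, whence the net characterization produces a $\varphi$-mean. Conversely, if $A$ has a bounded right approximate identity $(e_i)$ and is $\varphi$-amenable with net $(a_\alpha)$ as above (so $\|ca_\alpha\|\to0$ for $c\in I$), set $u_{(i,\alpha)}:=e_i-\varphi(e_i)a_\alpha\in I$; these are uniformly bounded and $\|cu_{(i,\alpha)}-c\|\le\|ce_i-c\|+|\varphi(e_i)|\,\|ca_\alpha\|$ is eventually small along the product directed set, giving a bounded right approximate identity for $I$. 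The only delicate points in this second equivalence are the weak-to-norm passage (Mazur) in the net characterization and the rigorous handling of the product-net limit, both of which are routine once isolated.
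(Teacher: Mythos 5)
The paper offers no proof of Theorem \ref{Coro.2} whatsoever: it explicitly defers the whole statement to the preprint \cite{l.k.p}, so there is no in-paper argument to compare yours against line by line. Your proposal is correct where it argues and defers strictly less than the paper does. The averaging argument for ``$\varphi$-amenable $\Rightarrow$ $\varphi$-left amenable'' is the standard Johnson-type computation and checks out: the identity $g_x\cdot a=g_{x\cdot a}+g_x(a)\varphi$ is right, and $\langle\xi,x\rangle=-m_\varphi(g_x)$ does implement $D(a)=a\cdot\xi-\xi\cdot a$. Your diagnosis of why the converse is the genuinely hard step --- that $a\mapsto\pi(a)-\varphi(a)\Lambda$ is a derivation into $(A^\ast)^\ast$ only when $\Lambda$ is a mixed identity, and that the real content is separating $\varphi$ from $\overline{\mbox{span}}\{f\cdot a-\varphi(a)f\}$ --- is accurate, and citing \cite{l.k.p} for that single implication mirrors what the paper does for all four. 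The ``Further'' equivalence you prove completely and correctly: the net characterization of $\varphi$-means (a $w^\ast$-cluster point in one direction, Goldstine plus Mazur in the other, exactly the mechanism used in the proof of Theorem \ref{T.4.1.2}); the elements $f_j=e+u_j-eu_j$ and $a_j=e-eu_j$ built from a bounded right approximate identity $(u_j)$ of $I_\varphi(A)$; and $u_{(i,\alpha)}=e_i-\varphi(e_i)a_\alpha$ for the converse, where the estimate $\|cu_{(i,\alpha)}-c\|\le\|ce_i-c\|+|\varphi(e_i)|\,\|ca_\alpha\|$ indeed converges along the product order because the two error terms are governed by independent indices. The only caveat: as written, your treatment of the first equivalence is one-directional, so a genuinely self-contained proof would still require importing (or reproving) the separation lemma from \cite{l.k.p}; relative to the paper, which proves nothing, this is not a defect but it should be stated as an explicit dependency rather than folded into a remark about ``the main obstacle.''
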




  It is  of vital importance  to observe that in  Theorem \ref{Coro.2} the boundedness of
                 approximate identities  cannot be omitted either in $A$ or  in  $I_\varphi(A)$; see  Section \ref{e.acc.} (iii).

\begin{corollary}\label{Co.1}
\emph{ By Theorems \ref{Coro.2}  we see that if  $I_\varphi(A)$ has a bounded right approximate identity, then every $X^\ast$-valued continuous
 derivation on $A$ is zero whenever   $X$ is a Banach
 $\varphi$-$A$-bimodule.}
\end{corollary}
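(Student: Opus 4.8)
The plan is to use Theorem \ref{Coro.2} to convert the approximate-identity hypothesis into $\varphi$-amenability, then to observe that on a $\varphi$-$A$-bimodule every inner derivation vanishes identically, so that amenability forces the derivation to be zero.

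First I would record that, by Theorem \ref{Coro.2}, the assumption that $I_\varphi(A)$ possesses a bounded right approximate identity implies that $A$ is $\varphi$-amenable (and indeed carries a bounded right approximate identity itself); in particular $A$ is $\varphi$-left amenable. Now let $X$ be any Banach $\varphi$-$A$-bimodule, so that $a\cdot x=x\cdot a=\varphi(a)x$ for all $a\in A$ and $x\in X$. Since this is a fortiori a $\varphi$-left $A$-bimodule, $\varphi$-left amenability guarantees that every continuous derivation $D:A\rightarrow X^\ast$ is inner: there is some $\psi\in X^\ast$ with $D(a)=a\cdot\psi-\psi\cdot a$ for all $a\in A$.

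The final step is a short computation of the dual module action. Unwinding the dual-module definitions against the defining relations of $X$ gives $\langle a\cdot f,x\rangle=\langle f,x\cdot a\rangle=\varphi(a)\langle f,x\rangle$ and similarly $\langle f\cdot a,x\rangle=\varphi(a)\langle f,x\rangle$, so $X^\ast$ is again a $\varphi$-$A$-bimodule with $a\cdot f=f\cdot a=\varphi(a)f$. Consequently the two terms of the inner derivation cancel, $D(a)=\varphi(a)\psi-\varphi(a)\psi=0$, and hence $D\equiv 0$. I expect no serious obstacle: the argument is essentially a bookkeeping exercise, and the only point needing care is the verification that the symmetric action $a\cdot f=f\cdot a=\varphi(a)f$ persists on passing to the dual $X^\ast$, which is immediate from the bimodule definitions.
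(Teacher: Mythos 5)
Your proposal is correct and follows exactly the route the paper intends (the corollary is stated without a written-out proof, as an immediate consequence of Theorem \ref{Coro.2}): the bounded right approximate identity in $I_\varphi(A)$ gives $\varphi$-amenability and hence $\varphi$-left amenability of $A$, so every continuous derivation into $X^\ast$ is inner, and your computation that the dual actions on $X^\ast$ both reduce to $f\mapsto\varphi(a)f$ — so that inner derivations vanish — is the right closing step. No gaps.
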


In various references, for particular cases,   it is already known that  in
 the spectral sets every bounded
point derivation is zero, but we have not seen it in general form.
 It seems worthwhile to give a complete proof of it here.

\begin{proposition}\label{L.7}
\emph{Let $A$ be a    commutative, semisimple and regular Banach algebra and   $\varphi\in
 \Phi(A)$. If $\{\varphi\}$ is a spectral set,  then every derivation from  $A$ into
   $X^\ast$ is zero whenever $X$ is a Banach $\varphi$-$A$-bimodule.}
\end{proposition}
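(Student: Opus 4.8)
The plan is to exploit that a derivation $D\colon A\to X^\ast$ into a Banach $\varphi$-$A$-bimodule is exactly a $\varphi$-derivation, i.e. $D(ab)=\varphi(a)D(b)+\varphi(b)D(a)$ for all $a,b\in A$ (as recorded just before the statement), and to show that such a $D$ must annihilate the whole maximal ideal $I_\varphi(A)$ once $\{\varphi\}$ is a set of synthesis. First I would record the immediate consequence of the derivation identity: if $a,b\in I_\varphi(A)$ then $\varphi(a)=\varphi(b)=0$, so $D(ab)=0$; hence $D$ vanishes on the algebraic ideal $I_\varphi(A)^2$, and, $D$ being continuous, on its closure $\overline{I_\varphi(A)^2}$.

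The heart of the argument is to identify $\overline{I_\varphi(A)^2}$ with $I_\varphi(A)$ under the synthesis hypothesis; this is where regularity and semisimplicity enter, through a local-identity construction. Let $j(\{\varphi\})$ denote the set of $a\in A$ whose Gelfand transform $\hat a$ vanishes on a neighbourhood of $\varphi$ and has compact support. For such an $a$, regularity yields $u\in A$ with $\hat u\equiv 1$ on $\mathrm{supp}\,\hat a$ and $\hat u(\varphi)=0$; then $\widehat{ua}=\hat u\,\hat a=\hat a$, so semisimplicity forces $ua=a$. Since $u,a\in I_\varphi(A)$, this exhibits $a=ua\in I_\varphi(A)^2$, so that $j(\{\varphi\})\subseteq I_\varphi(A)^2$. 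Because $\{\varphi\}$ being a spectral set means precisely $\overline{j(\{\varphi\})}=I_\varphi(A)$, I obtain
\[ I_\varphi(A)=\overline{j(\{\varphi\})}\subseteq\overline{I_\varphi(A)^2}\subseteq I_\varphi(A), \]
so all three coincide. Combined with the previous paragraph, $D$ vanishes on $I_\varphi(A)$.

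It remains to pass from $I_\varphi(A)$ to all of $A$. Fixing any $b_0\in A$ with $\varphi(b_0)=1$ (possible since $\varphi\neq 0$), I would note that $b_0-b_0^2\in I_\varphi(A)$, whence $0=D(b_0-b_0^2)=D(b_0)-2\varphi(b_0)D(b_0)=-D(b_0)$, giving $D(b_0)=0$. For arbitrary $a\in A$ one has $a-\varphi(a)b_0\in I_\varphi(A)$, so $D(a)=\varphi(a)D(b_0)=0$. Thus $D\equiv 0$.

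The main obstacle I anticipate is the step $\overline{I_\varphi(A)^2}=I_\varphi(A)$: it is the only place where the three standing hypotheses (regularity, semisimplicity, and $\{\varphi\}$ being a spectral set) all intervene, and one must be careful about the precise definition of the synthesis ideal $j(\{\varphi\})$ and about the compactness of supports needed to manufacture the local identity $u$. Everything else — the vanishing on $I_\varphi(A)^2$ and the final extension — is formal. I would also emphasize that the argument is insensitive to $X$ being vector-valued: only the $\varphi$-derivation identity together with the linearity and norm-continuity of $D$ are used, so the computation is identical whether $D$ takes values in $\CC$ or in $X^\ast$.
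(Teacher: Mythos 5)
Your argument is correct, but it takes a genuinely different route from the paper's. You prove that $D$ kills $I_\varphi(A)$ by establishing $\overline{I_\varphi(A)^2}=I_\varphi(A)$, using the characterization of synthesis via $\overline{j(\{\varphi\})}=I_\varphi(A)$ together with a local-identity construction ($\hat u\equiv 1$ on $\mathrm{supp}\,\hat a$, $\hat u(\varphi)=0$) supplied by regularity and semisimplicity. The paper instead fixes $x\in X$, passes to the scalar $\varphi$-derivation $D_x(a)=D(a)(x)$, and considers the closed ideal $J=\{a: \varphi(a)=D_x(a)=0\}$; the identity $D_x(b^2)=2\varphi(b)D_x(b)$ shows $b^2\in J$ for all $b\in I_\varphi(A)$, whence any character vanishing on $J$ vanishes on $I_\varphi(A)$ and so equals $\varphi$, i.e.\ $Co(J)=\{\varphi\}$; the spectral-set hypothesis, in the form ``$I_\varphi(A)$ is the only closed ideal with hull $\{\varphi\}$,'' then forces $J=I_\varphi(A)$. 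The two formulations of synthesis you each invoke are equivalent for commutative regular semisimple algebras, so both proofs are legitimate; the paper's is somewhat leaner in that it needs no local units and no discussion of $j(\{\varphi\})$ or supports of Gelfand transforms, while yours isolates the reusable structural fact $\overline{I_\varphi(A)^2}=I_\varphi(A)$ (valid for any synthesis singleton) and works with the vector-valued $D$ directly rather than reducing to scalars. The concluding step (evaluating at some $b_0$ with $\varphi(b_0)=1$ via $b_0-b_0^2\in I_\varphi(A)$) is the same in both. One small point of care in your version: the existence of $u$ with $\hat u\equiv 1$ on the compact set $\mathrm{supp}\,\hat a$ and $\hat u$ vanishing at (indeed near) $\varphi$ uses the standard ``local units'' consequence of regularity, not merely the separation of a point from a closed set; you should cite or prove that refinement.
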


\begin{proof} Let $X$ be a Banach $\varphi$-$A$-bimodule and   $D:A\rightarrow X^\ast$  a
derivation. For $x$, a fixed element of $X$, define
$D_x:A\rightarrow \CC$ by $D_x(a):=D(a)(x)$. Then $D_x$ is a $\varphi$-derivation on $A$.
  The  set  $J:=\{a\in A:
 \;\varphi(a) = D_x(a) = 0\}$ is an ideal
 in $A$ with $Co(J)=\{\varphi\}$, where $Co(J):=\{\phi\in \Phi(A): \phi(a)=0,  \forall a\in J\}$. To see   this,  let $a\in A$
  and $b\in J$. Since
 $D_x(a b)=\varphi(a) D_x(b) + \varphi(b)D_x(a)=0$,
we have    $a b\in J$.  If $b\in I_\varphi(A)$, then
  $D_x(b^2)=2\varphi(b)D_x(b)=0$, which   implies that $b^2\in J$. For every   $\phi\in
Co(J)$ we have     $\phi(b)^2=\phi(b^2)=0$, hence
$I_\varphi(A)\subseteq I_\phi(A)$, and thus  $\varphi=\phi$; thence
$J=I_\varphi(A)$, since $\{\varphi\}$ is a spectral set for $A$. Let
$a\in A$ with $\varphi(a)=1$. Evidently $a^2-a\in I_\varphi(A)$, and
since $D_x|_{I_\varphi(A)}=0$,  $D_x(a^2)=D_x(a)$,  so $D_x(a)=0$
which completes the proof.
 \end{proof}

\begin{remark}

 \emph{Observe that in general a Banach algebra  $A$ is
    not necessarily $\varphi$-amenable  if $\{\varphi\}$ is a spectral set for $A$.   For
    example, let $A$ be  the hypergroup algebra of the Bessel-Kingman
    hypergroup of order $\nu=0$. Then $A$ is $\varphi$-amenable if and only if
    $\varphi$ is associated with the identity character on the hypergroup \cite{azimifard.Monath} although every character  of $A$ is a
    spectral set \cite{herz.1}.}

 \end{remark}

\begin{remark}
     \emph{
    One may pose the     question of whether   $A$ is   amenable if  $A$ is $\varphi$-amenable
    for every $\varphi\in Hom(A, \CC)$
    and if
    functionals $m_\varphi$ associated to
the characters $\varphi$  are uniformly bounded.
The following example from \cite{Bade.dales.curtes} shows that, in general, these
assumptions cannot  imply  even  weak amenability of $A$.}
\end{remark}

 {\bf{Example:}}

Let $AC$ be the set of absolutely continuous functions on the unit
circle $\TT$. For $f\in AC$, set $\left\| f\right\|=\left| f
\right|_{\TT}+\int_0^{2\pi}\left| f'(e^{i\theta})\right|d\theta$,
where $\left| \cdot  \right|_{\TT}$ denotes the uniform norm on
$\TT$. Then $(AC, \left\| \cdot \right\|)$ is a Banach function
algebra on $\TT.$ The Banach algebra  $L^1(\TT)$ is a commutative
Banach $AC$-bimodule with respect to the pointwise product of
functions, and the map $f\mapsto f'$, $AC\rightarrow L^1(\TT)$, is a
nonzero derivation. Thus $AC$ is not weakly amenable
\cite{Bade.dales.curtes}. Let $M$ be a maximal ideal of $AC$, so
that $M=\{f\in AC: f(z_0)=0\}$ for some $z_0\in \TT.$ Set
$e_n(z)=\min \left\{ n\left| z-z_0 \right|, 1 \right\}$. Then it is
easy to check that $\{e_n\}$ is a bounded approximate identity in
$M$, hence,  by Corollary \ref{Co.1},  there are no nonzero
$\varphi_{z_0}$-derivations on $AC$. Let  $m_{z_0}$ be an
accumulation point of $\{e_n\}_n$ in the bidual space
$AC^{\ast\ast}$.
 Let  $u\in AC$ such that  $u(z_0)=1$. Then    the functional $M_{z_0}=u-m_{z_0}u$ is a
  $\varphi_{z_0}$-mean on $AC^\ast$,  and since  $\left|e_n(z)\right|\leq 1$,
  obviously $\left\| m_{z_0} \right\|\leq 2$ when $z_0$ varies on $\TT$.\\

 In the remaining of this section we investigate the $\varphi$-amenable Banach algebras with unique $\varphi$-means.
 To continue we may require the following simple lemma.

\begin{lemma}\label{lemma.unique}
\emph{Let $X$ be a    linear, closed, $A$-left invariant  subspace of $A^\ast$.
If $X$ is $\varphi$-left amenable with a  $\varphi$-mean $m_\varphi$, then
$m_\varphi(\phi)=\delta_\varphi(\phi)$
  for every $\phi\in Hom(A, \CC).$}
\end{lemma}
\begin{proof}
For every   $\phi\in Hom(A, \CC)$ and $a\in A$ we have
\[
\phi(a)m_\varphi(\phi)=m_\varphi(\phi\cdot a)
=\varphi(a)m_\varphi(\phi),\notag
\]
which implies that  $m_\varphi(\phi)=\delta_\varphi(\phi)$.
\end{proof}

%

Let $X$ be a   linear, closed,  $A$-left invariant subspace of $A^\ast$ such that $X^\ast\cdot  X\subseteq X$.
%
Moreover, suppose that $X^\ast$ with the following product is a
Banach algebra,
\begin{equation}\label{equ.2}
\langle m\cdot n,f\rangle=\langle m,n\cdot f\rangle,\hspace{1.cm} \mbox{ for every }m,n\in X^\ast \mbox{ and } f\in X.
\end{equation}
The topological centre of $X^{\ast}$  is defined as follows:
 \begin{align}\label{1.14.10}
Z(  X^{\ast}):=\Big{\{} m\in  X^{\ast}: \;\mbox{ the map }& n
\longrightarrow m\cdot  n \;\mbox{ is }\\ \notag &\; \mbox{
$w^\ast$-$w^{\ast}$ continuous  on\;}
 X^{\ast}\Big{\}}. \notag
\end{align}
Note that   $Z(  X^{\ast})$ is a closed  subalgebra of $X^\ast$.



\begin{theorem}\label{T.6}
\emph{Suppose  $ X\subseteq A^\ast$ is as above, $Hom(A,
\CC)\subseteq X$,  and $\varphi\in Hom(A, \CC)$. If $X$ is
$\varphi$-left amenable with  a  unique $\varphi$-mean $m_\varphi$,
then $m_\varphi\in Z(X^{\ast})$.  }
\end{theorem}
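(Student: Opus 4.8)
The plan is to prove the sharper identity $m_\varphi\cdot n = n(\varphi)\,m_\varphi$ for every $n\in X^\ast$; once this is in hand, membership $m_\varphi\in Z(X^\ast)$ is immediate. Indeed, since $\varphi\in X$, the evaluation $n\mapsto n(\varphi)=\langle n,\varphi\rangle$ is $w^\ast$-continuous on $X^\ast$, so whenever $n_\alpha\overset{w^\ast}{\longrightarrow}n$ we get $m_\varphi\cdot n_\alpha = n_\alpha(\varphi)\,m_\varphi\overset{w^\ast}{\longrightarrow} n(\varphi)\,m_\varphi = m_\varphi\cdot n$. Thus $n\mapsto m_\varphi\cdot n$ is $w^\ast$-$w^\ast$ continuous, which is exactly the defining requirement in \reff{1.14.10}. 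So the whole theorem reduces to the explicit formula for $m_\varphi\cdot n$.

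To establish that formula I would first record two elementary identities valid for all $n\in X^\ast$, $f\in X$ and $a\in A$: namely $n\cdot(f\cdot a)=(n\cdot f)\cdot a$ and $n\cdot\varphi = n(\varphi)\,\varphi$. The first follows by pairing both sides against an arbitrary $b\in A$ and using $(f\cdot a)\cdot b=f\cdot(ab)$; the second follows from $\varphi\cdot a=\varphi(a)\,\varphi$. Using these, for the given $\varphi$-mean $m_\varphi$ and any $n\in X^\ast$ I would check that $m_\varphi\cdot n$ again satisfies condition (ii) of a $\varphi$-mean, since
\[
\langle m_\varphi\cdot n, f\cdot a\rangle=\langle m_\varphi, n\cdot(f\cdot a)\rangle=\langle m_\varphi,(n\cdot f)\cdot a\rangle=\varphi(a)\langle m_\varphi,n\cdot f\rangle=\varphi(a)\langle m_\varphi\cdot n,f\rangle,
\]
where $n\cdot f\in X$ because $X^\ast\cdot X\subseteq X$. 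Its value at $\varphi$ is $\langle m_\varphi\cdot n,\varphi\rangle=\langle m_\varphi, n\cdot\varphi\rangle=n(\varphi)\,m_\varphi(\varphi)=n(\varphi)$.

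Now the uniqueness hypothesis enters. If $n(\varphi)\neq 0$, then $n(\varphi)^{-1}(m_\varphi\cdot n)$ satisfies both (i) and (ii), hence is a $\varphi$-mean and, by uniqueness, equals $m_\varphi$; this gives $m_\varphi\cdot n=n(\varphi)\,m_\varphi$. The case $n(\varphi)=0$ is the one point requiring care, and I would treat it by an additive normalization: $m_\varphi\cdot n$ satisfies (ii) and vanishes at $\varphi$, so $m_\varphi\cdot n+m_\varphi$ satisfies (ii) and takes the value $1$ at $\varphi$; it is therefore a $\varphi$-mean, and uniqueness forces $m_\varphi\cdot n+m_\varphi=m_\varphi$, i.e. $m_\varphi\cdot n=0=n(\varphi)\,m_\varphi$. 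In all cases $m_\varphi\cdot n=n(\varphi)\,m_\varphi$, which closes the argument.

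The only genuinely delicate step is the degenerate case $n(\varphi)=0$, where uniqueness cannot be applied to $m_\varphi\cdot n$ directly; the additive trick above circumvents this by restoring the normalization at $\varphi$. Everything else is a routine unwinding of the module and Arens-type products, and the concluding continuity statement is essentially free once the explicit formula $m_\varphi\cdot n=n(\varphi)\,m_\varphi$ has been secured.
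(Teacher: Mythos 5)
Your proof is correct and follows essentially the same route as the paper's: both establish the identity $m_\varphi\cdot n=n(\varphi)\,m_\varphi$ via uniqueness of the $\varphi$-mean and then deduce the $w^\ast$-$w^\ast$ continuity of $n\mapsto m_\varphi\cdot n$ from the $w^\ast$-continuity of $n\mapsto\langle n,\varphi\rangle$. The only difference is that you spell out the normalization needed to invoke uniqueness (including the degenerate case $n(\varphi)=0$), a detail the paper passes over in a single line.
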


\begin{proof}
Let $n\in X^{\ast}$, $f\in X$, and $a\in A$. Since $X\cdot  A\subseteq X$ and $X^\ast \cdot  X\subseteq X$, we have
\begin{align}\notag
\langle m_\varphi\cdot n,f\cdot a\rangle & = \langle m_\varphi ,
n\cdot (f\cdot a)\rangle
\\ \notag
 &=\langle m_\varphi, (n\cdot f)\cdot a\rangle\\
\notag &=\varphi(a)\langle m_\varphi, n\cdot f\rangle \\
\notag &=  \varphi(a)\langle m_\varphi\cdot  n , f\rangle. \notag
\end{align}
Since $m_\varphi$ is a unique $\varphi$-mean on $X$, $\langle
m_\varphi,\varphi\rangle =1$  and $ \langle n \cdot  \varphi, a
\rangle =\langle n, \varphi\cdot  a\rangle= \varphi(a)\langle n,
\varphi\rangle $, we have $m_\varphi\cdot  n=\lambda_n n$,
 where $\lambda_n=\langle n, \varphi\rangle .$
Let   $n\in X^{\ast}$  and $\{n_i\}_i$ be a net in
$X^{\ast}$
  such that
 $n_i\overset{w^\ast}{\rightarrow} n$. Then
\[\lambda_{n_i}=\langle n_i,\varphi\rangle \rightarrow
\langle n, \varphi\rangle =\lambda_n, \hspace{.2in}\mbox{ as } i\rightarrow \infty,
\]
hence  for every $f\in A^\ast$ we have
\begin{align}\label{1.56.2}\notag
\langle m_\varphi\cdot n_i,f\rangle\;&=\langle \lambda_im_\varphi,f\rangle\;\\
\notag &=\lambda_i \cdot  \langle m_\varphi, f\rangle\;
 \rightarrow \lambda_n.\langle m_\varphi,f\rangle=\langle m_\varphi\cdot n,f\rangle,
 \hspace{.2in}\mbox{as}
  \;i\rightarrow \infty . \notag
\end{align}
 Thus,  the map  $n\rightarrow {m_\varphi}\cdot  n$ is
$w^*$-$w^*$ continuous on $X^\ast$,
 hence
$m_\varphi\in Z({X^\ast}).$
\end{proof}

\begin{remark}
\emph{
\begin{itemize}
    \item[(i)] In the above theorem, suppose    that  $A$ has a  bounded approximate identity,
      bounded by 1,  $X=A^\ast A$,  and $A Z(A^{\ast\ast})\subseteq A$.
We note  that $X^\ast$ with the product defined in (\ref{equ.2}) is a Banach algebra, and  $Z(X^\ast)$ can be identified
with the right multiplier algebra of $A$, in particular $AZ(X^\ast)\subseteq A$; see \cite[p.1295]{L.U}. So,
if  $X$ is $\varphi$-amenable with a unique $\varphi$-mean
$m_\varphi$, then  the map
 $T_{m_\varphi}:A\rightarrow A$ defined by
    $T_{m_\varphi}(a)=a\cdot m_\varphi$ is a right multiplier on $A$,
    i.e. $T_{m_\varphi}(ab)=aT_{m_\varphi}(b)$,  for every $a,b\in A$.
     In addition, if $A$ is semisimple and commutative, then there exits
    a $\psi\in C^b(\Phi(A))$ such that $\widehat{T_{m_\varphi}a}=\psi \widehat{a}$
    with  $\|\psi\|_\infty\leq \|m_\varphi\|$, where
    $\widehat{a}$ denotes  the Gelfand transform of $a$;
    see \cite[1.2.2]{larsen.Mu.}. Consequently,
    by Lemma \ref{lemma.unique} we have
    $\psi(\phi)=\delta_\varphi(\phi)$ which implies that the character $\varphi$ is  isolated  in $\Phi(A)$.
    \item[(ii)] It is crucial to note that the $\varphi$-left amenability of $X$  with a unique $\varphi$-mean
     depends  on  subspaces $X$ and the characters $\varphi$. For instance, if $G$ is a
     locally compact group, then   $L^\infty(G)$
    is 1-amenable with a unique 1-mean if and only if $G$ is compact; see \cite{Pat88}.
     However, subalgebras $AP(G)$ and $WAP(G)$ of $L^\infty(G)$ are always 1-amenable with the unique 1-mean \cite[p.88]{Pat88}.
    \item[(iii)] Let  $A$ be  $L^1(K)$ when $K$ is the little q-Legendre polynomial hypergroup.
     Then $A$ is $\varphi$-amenable at every character $\varphi\in \Phi(A)$;
     and,
     the cardinality of $\varphi$-means is one  and infinity if $\varphi$ is associated
     with nontrivial and trivial characters of the hypergroup, respectively \cite{azimifard.Monath}. However,
          $WAP(K)$ has only  the unique  $\varphi$-means at every character $\varphi$, \cite{azimifard.Monath, Ska92, Wolf.WAP}.\\
         We may also observe that
          $L^\infty(K)^\ast$ ($K$ is a commutative hypergroup) is isometrically  isomorphic to
          the Banach algebra of finitely additive measures on $K$; see  \cite{stromberg}. So, if  $L^\infty(K)$
          is $\varphi$-amenable with the unique $\varphi$-mean
          $m_\varphi$, then by part (i)
          the finitely additive measure associated with   $m_\varphi$ belongs to the multiplier algebra
          of $L^1(K)$, i.e. $M(K)$ \cite{BloHey94}.
         \\[-1.cm]\end{itemize}
         }

\end{remark}

\section{$\varphi$-Amenability of Direct Sum of Banach Algebras}\label{sub.amenability}

In this section we  study  the $\varphi$-amenability of the Banach
algebra  $A\oplus_{\phi} B$,
 the direct sum of two given Banach algebras $A$ and $B$ where its  algebraic structure
depends on $\phi\in Hom(B, \CC)$. Such  algebras in special cases have  been studied in   \cite{thesis, Lau83}.
\begin{definition}\label{1.31}
\emph{Let $A$ and $B$ be Banach algebras and $\phi\in Hom(B, \CC)$.
 Define direct sum of $A$ and $B$,
 denoted by
$A\oplus_{\phi} B$, to be the algebra over the complex numbers
consisting of all ordered pairs $(a, b), a\in A,  b\in B$ with
coordinatewise addition and
 scalar multiplication,
and the  product of two elements $a=(a_1,a_2)$ and $b=(b_1,b_2)$ is defined by}
\begin{equation}\label{1.31.1}
a\cdot_{\phi} b =(a_1 b_1+\phi(a_2)b_1+\phi(b_2)a_1, a_2 b_2).
\end{equation}
\end{definition}
%
%
For every $a=(a_1,a_2)$ as above  define $ \|(a_1,a_2)
\|_{\oplus_{\phi}} = \| a_1 \|+
 \| a_2 \|$.  Plainly  we have  $\|a\cdot_\phi b \|_{\oplus_{\phi}}\leq \|a \|_{\oplus_{\phi}} \|b \|_{\oplus_{\phi}}$,  and so
 $\|\cdot  \|_{\oplus_{\phi}}$
is a norm on $A\oplus_{\phi} B$.
                                If $A$ and $B$ are  Banach $\ast$-algebras,
then $(A\oplus_{\phi} B,\|\cdot \|_{\oplus_{\phi}}) $ with the
the involution  defined by $a^\ast:=(a_1^{\ast}, a_2^{\ast})$ and  the
algebraic product in
 (\ref{1.31.1})   is a  Banach $\ast$ - algebra.
\index{$\|\cdot \|_{\oplus_{\phi}}$}
Obviously $(A, \|\cdot \|)$ and $(B, \|\cdot  \|)$ are subalgebras of
$A\oplus_{\phi} B$.
%
%
%
The following proposition  is the special case of
\cite[1.10.13]{maginson}.
\begin{proposition}\label{theorem.1.1}
\emph{Let $A$ and $B$ be  Banach algebras. Then $\left(A\oplus_\phi
B\right)^\ast$ is isometrically isomorphic  to $A^\ast \times
B^\ast$ with the norm $\left\| (f,g)\right\| =\mbox{max} \left\{ \|
f\|,\| g\|\right\} $, $f\in A^\ast $ and $g \in B^\ast$. The
isomorphism
\[\eta: A^\ast \times B^\ast \longrightarrow
\left(A\oplus_\phi B\right)^{\ast}\]
  is given by
\begin{equation}\notag
   \langle \eta(f, g), (a,b)\rangle =f(a)+g(b),
\end{equation}
  for every   $(f, g)\in A^\ast\times B^\ast$ and  $(a,b)\in A\oplus_\phi
   B$.}
\end{proposition}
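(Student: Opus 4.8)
The plan is to observe first that the statement concerns only the normed-space structure of $A\oplus_\phi B$, not its multiplication: by Definition \ref{1.31} and the norm $\|(a,b)\|_{\oplus_\phi}=\|a\|+\|b\|$, the underlying Banach space of $A\oplus_\phi B$ is exactly the $\ell^1$-direct sum of $A$ and $B$, and the twisting by $\phi$ affects the product (\ref{1.31.1}) but is invisible to the dual. Thus the result is just the standard identification of the dual of an $\ell^1$-sum with the $\ell^\infty$-sum of the duals. Concretely, I would define $\eta$ by $\langle \eta(f,g),(a,b)\rangle = f(a)+g(b)$ and first check that $\eta(f,g)$ is a bounded linear functional: linearity in $(a,b)$ is clear, and boundedness follows from $|f(a)+g(b)|\le \|f\|\,\|a\|+\|g\|\,\|b\|$, so $\eta$ indeed maps into $(A\oplus_\phi B)^\ast$. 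Linearity of $\eta$ itself in $(f,g)$ is immediate from the defining formula.

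Next I would establish that $\eta$ is a bijection. For surjectivity, given $F\in(A\oplus_\phi B)^\ast$, set $f(a):=\langle F,(a,0)\rangle$ and $g(b):=\langle F,(0,b)\rangle$; then $f\in A^\ast$ and $g\in B^\ast$ because the coordinate inclusions $a\mapsto(a,0)$ and $b\mapsto(0,b)$ are isometric, and since $(a,b)=(a,0)+(0,b)$ we get $\eta(f,g)=F$. Injectivity is then automatic, or can be seen directly: if $\eta(f,g)=0$ then testing on $(a,0)$ and $(0,b)$ forces $f=0$ and $g=0$.

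The substantive step is the isometry $\|\eta(f,g)\|=\max\{\|f\|,\|g\|\}$. For the upper bound, for any $(a,b)$ with $\|a\|+\|b\|\le 1$ I would estimate
\[
|f(a)+g(b)|\le \|f\|\,\|a\|+\|g\|\,\|b\|\le \max\{\|f\|,\|g\|\}\bigl(\|a\|+\|b\|\bigr)\le \max\{\|f\|,\|g\|\},
\]
so that $\|\eta(f,g)\|\le\max\{\|f\|,\|g\|\}$. For the lower bound I would test on elements supported in a single coordinate: taking $b=0$ and $\|a\|\le 1$ shows $\|\eta(f,g)\|\ge\sup_{\|a\|\le1}|f(a)|=\|f\|$, and symmetrically $\|\eta(f,g)\|\ge\|g\|$, whence $\|\eta(f,g)\|\ge\max\{\|f\|,\|g\|\}$. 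Combining the two bounds yields the claimed isometry.

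The only point demanding care — and the closest thing to an obstacle — is this lower bound: it relies on the fact that the coordinate embeddings are isometric, so that the suprema defining $\|f\|$ and $\|g\|$ are genuinely approximated within the unit ball of $A\oplus_\phi B$ by elements of the form $(a,0)$ and $(0,b)$. Everything else is routine, and since the whole argument uses only the additive norm $\|\cdot\|_{\oplus_\phi}$, it is insensitive to $\phi$, which is why the proposition is stated as a special case of \cite[1.10.13]{maginson}.
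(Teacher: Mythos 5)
Your proof is correct and complete. The paper itself offers no argument for this proposition beyond the citation to \cite[1.10.13]{maginson}; your write-up is exactly the standard duality between the $\ell^1$-sum and the $\ell^\infty$-sum of the duals that this citation invokes, including the key observation that the product twisted by $\phi$ is invisible at the level of the underlying Banach space.
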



If $A^{\ast\ast}$ and
$B^{\ast\ast}$ are equipped with the  Arens product, then
$A^{\ast\ast}\oplus_{\phi}B^{\ast\ast}$ is an  algebra as above and
the  product of two elements $(m_1, m_2)$ and $(m'_1, m'_2)$ of
$A^{\ast\ast}\oplus_{\phi}B^{\ast\ast}$
  is given by
\begin{equation}\label{t.4}\notag
(m_1, m_2) \odot_\phi (m'_1, m'_2) =(m_1\cdot m'_1+m_2(\phi)m'_1+
m'_2(\phi)m_1, m_2\cdot m'_2).
\end{equation}
%
%
%
\begin{proposition}\label{t.6}
\emph{ Let $A$ and $B$ be Banach algebras. Then
$A^{\ast\ast}\oplus_\phi B^{\ast\ast}$ is isometric and
 algebraically isomorphic to the bidual  algebra
$\left(A\oplus_\phi B\right)^{\ast\ast}$.}
\end{proposition}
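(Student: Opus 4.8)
The plan is to exhibit an explicit map and verify it is simultaneously an isometry and an algebra isomorphism. The natural candidate is built from the dual isomorphism $\eta\colon A^\ast\times B^\ast\to(A\oplus_\phi B)^\ast$ of Proposition \ref{theorem.1.1}: taking adjoints gives $\eta^\ast\colon(A\oplus_\phi B)^{\ast\ast}\to(A^\ast\times B^\ast)^\ast$, and since $(A^\ast\times B^\ast)^\ast\cong A^{\ast\ast}\oplus_\phi B^{\ast\ast}$ as Banach spaces (again by the standard $\ell^1$--$\ell^\infty$ duality underlying Proposition \ref{theorem.1.1}), I obtain a candidate linear bijection $\Theta\colon(A\oplus_\phi B)^{\ast\ast}\to A^{\ast\ast}\oplus_\phi B^{\ast\ast}$. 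Concretely, for $M\in(A\oplus_\phi B)^{\ast\ast}$ one sets $\Theta(M)=(m_1,m_2)$ where $\langle m_1,f\rangle=\langle M,\eta(f,0)\rangle$ and $\langle m_2,g\rangle=\langle M,\eta(0,g)\rangle$ for $f\in A^\ast$, $g\in B^\ast$.

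First I would check that $\Theta$ is an isometric linear bijection. Linearity and bijectivity are immediate from the fact that $\eta$ is an isometric isomorphism and that adjoints of isometric isomorphisms are isometric isomorphisms; the norm on $A^{\ast\ast}\oplus_\phi B^{\ast\ast}$ is the $\ell^1$-type norm $\|(m_1,m_2)\|=\|m_1\|+\|m_2\|$, which is precisely the dual norm of the $\max$-norm on $A^\ast\times B^\ast$ carried over by $\eta$, so isometry follows formally. This part is routine Banach-space duality and should take only a few lines.

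The substantive step is verifying that $\Theta$ is multiplicative, i.e. that $\Theta(M\cdot N)=\Theta(M)\odot_\phi\Theta(N)$, where the product on the left is the first Arens product on $(A\oplus_\phi B)^{\ast\ast}$ and $\odot_\phi$ on the right is the product displayed just before the proposition. The strategy is to unwind both products against an arbitrary elementary functional $\eta(f,0)$ and $\eta(0,g)$. I would compute the right module action $\eta(f,g)\cdot(a,b)$ of $A\oplus_\phi B$ on $(A\oplus_\phi B)^\ast$ explicitly using the product formula \reff{1.31.1}, which mixes the two coordinates through $\phi$; then compute the induced action of $N\in(A\oplus_\phi B)^{\ast\ast}$ on functionals; and finally evaluate $\langle M\cdot N,\eta(f,0)\rangle$ and $\langle M\cdot N,\eta(0,g)\rangle$. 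Tracking how the $\phi(a_2)b_1$ and $\phi(b_2)a_1$ cross-terms in \reff{1.31.1} pass through the two Arens-product extensions is exactly what produces the cross-terms $m_2(\phi)m'_1$ and $m'_2(\phi)m_1$ in the formula for $\odot_\phi$; this is where the coefficients $m_2(\phi)$ and $m_2'(\phi)$ must emerge from $\pi(\phi)$ acting in the second coordinate.

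The main obstacle I anticipate is the bookkeeping in this multiplicativity computation: the Arens product is defined by a two-stage extension (first $A$ acts on $A^\ast$, then $A^{\ast\ast}$ acts on $A^\ast$, then the outer bidual element acts), and one must be careful that the $w^\ast$-continuity properties invoked at each stage justify replacing honest elements by bidual elements while the $\phi$-twisted cross-terms behave as scalars $m_2(\phi)$. Because $\phi$ and its cross-terms enter only through evaluation $\langle\,\cdot\,,\phi\rangle$, which is $w^\ast$-continuous, the cross-terms should survive the extension cleanly and reproduce $\odot_\phi$ exactly; the decomposition $\eta(f,g)=\eta(f,0)+\eta(0,g)$ together with linearity lets me treat the two output coordinates separately, so the verification reduces to matching the first coordinate of $M\cdot N$ with $m_1\cdot m_1'+m_2(\phi)m_1'+m_2'(\phi)m_1$ and the second with $m_2\cdot m_2'$, each an Arens product in the respective factor.
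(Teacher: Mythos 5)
Your proposal is correct and follows essentially the same route as the paper: the paper defines the map $J(m_1,m_2)$ by $\langle J(m_1,m_2),(f,g)\rangle=m_1(f)+m_2(g)$ (the inverse of your $\Theta$), cites \cite{maginson} for the isometric linear isomorphism, and then verifies multiplicativity by exactly the computation you outline, first establishing $J(m'_1,m'_2)\cdot(f,g)=(m'_1\cdot f+m'_2(\phi)f,\,m'_1(f)\phi+m'_2\cdot g)$ and then pairing with $J(m_1,m_2)$ to recover $\odot_\phi$. The only remark worth making is that the $w^\ast$-continuity worries you raise are unnecessary: the multiplicativity check is a purely algebraic unwinding of the three-stage definition of the Arens product, with no weak-star limits needed.
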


\begin{proof}
Define the  map  $J:A^{\ast\ast}\oplus_\phi B^{\ast\ast}\longrightarrow
\left(A\oplus_\phi B\right)^{\ast\ast}$ by  $(m_1, m_2) \mapsto J(m_1,
m_2)$   such that
 \begin{equation}\label{r.r.2}\notag
   \langle J(m_1, m_2),
(f,g)\rangle =m_1(f)+m_2(g),
\end{equation}
 for every $(m_1, m_2)\in
A^{\ast\ast}\times B^{\ast\ast}$ and $(f,g)\in \left(A\oplus_\phi
B\right)^\ast$.
The map $J$ is   a linear       isomorphism
\cite[1.10.13]{maginson}, so we shall now   check that the
isomorphism is  algebraic as well.
For every $(f, g)\in \left(A\oplus B\right)^\ast$ it is easily to
verify that
\begin{equation}\notag
J(m_1, m_2)\cdot(f,g)=\left( m_1\cdot f +  m_2(\phi) f,   m_1(f) \phi + m_2\cdot g\right).
\end{equation}

Let $(m_1, m_2)$, $ (m'_1, m'_2)\in A^{\ast\ast}\times
B^{\ast\ast}$. Then
\begin{align}\notag
\langle J(m_1,m_2)\cdot J(m'_1,&  m'_2), (f,g)\rangle
= \langle J(m_1,m_2),  J(m'_1,m'_2)\cdot (f,g)\rangle \\ \notag
&=\langle J(m_1,m_2), (m'_1\cdot f+ m'_2(\phi)f, m'_1(f)\phi +m'_2\cdot g)\rangle \\ \notag
 &= \langle m_1, m'_1\cdot f+m'_2(\phi)f\rangle+ \langle  m_2,  m'_1(f) \phi+m'_2\cdot g\rangle\\ \notag
 &=\langle m_1,m'_1\cdot f\rangle + m'_2(\phi) m_1(f)+  m'_1(f)m_2 (\phi) +\langle m_2, m'_2\cdot g\rangle \\ \notag
 & =\langle {\left( m_1\cdot m'_1 + m'_2(\phi)m_1+m_2(\phi)m'_1,  m_2\cdot m'_2 \right),
(f,g)}\rangle \\ \notag
 &= \langle J((m_1, m_2) \odot_\phi(m'_1, m'_2)), (f, g)\rangle, \notag
\end{align}
 hence
\[J\left((m_1, m_2)\odot_\phi(m'_1, m'_2)\right)=J(m_1, m_2)\cdot J(m'_1, m'_2).\]
 \end{proof}
%
%
%
\begin{theorem}\label{t.7}
\emph{ Let $A$ and $B$ be Banach algebras. Then  the following
algebraic isomorphism holds
\begin{equation}\notag
Z\left((A\oplus_\phi B)^{\ast\ast}\right)\cong Z(A^{\ast\ast})\oplus_\phi
Z(B^{\ast\ast}).
\end{equation}
}
\end{theorem}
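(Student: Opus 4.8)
The plan is to transport the problem through the isometric algebra isomorphism $J$ of Proposition \ref{t.6} and then compute the topological centre of the product algebra $A^{\ast\ast}\oplus_\phi B^{\ast\ast}$ directly from the defining formula for $\odot_\phi$. First I would record that $J:A^{\ast\ast}\oplus_\phi B^{\ast\ast}\to (A\oplus_\phi B)^{\ast\ast}$ is not merely an algebra isomorphism but also a $w^\ast$-$w^\ast$ homeomorphism. This follows from Proposition \ref{theorem.1.1}, which identifies $(A\oplus_\phi B)^\ast$ with $A^\ast\times B^\ast$: since $\langle J(m_1,m_2),(f,g)\rangle = m_1(f)+m_2(g)$ is exactly the natural pairing of $A^{\ast\ast}\times B^{\ast\ast}$ against this predual, the $w^\ast$-topology on $(A\oplus_\phi B)^{\ast\ast}$ pulls back under $J$ to the product of the $w^\ast$-topologies on $A^{\ast\ast}$ and $B^{\ast\ast}$. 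Because the topological centre is defined in \ref{1.14.10} purely through $w^\ast$-$w^\ast$ continuity of left multiplication, and $J$ respects both the product and the $w^\ast$-topology, it maps $Z(A^{\ast\ast}\oplus_\phi B^{\ast\ast})$ bijectively onto $Z((A\oplus_\phi B)^{\ast\ast})$. Thus it suffices to prove $Z(A^{\ast\ast}\oplus_\phi B^{\ast\ast}) = Z(A^{\ast\ast})\oplus_\phi Z(B^{\ast\ast})$.

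Next I would fix $(m_1,m_2)$ and analyze the left multiplication map $(m_1',m_2')\mapsto (m_1,m_2)\odot_\phi(m_1',m_2') = (m_1\cdot m_1'+m_2(\phi)m_1'+m_2'(\phi)m_1,\, m_2\cdot m_2')$, using that $w^\ast$-convergence in the product is coordinatewise. Two of the three summands in the first coordinate are automatically $w^\ast$-continuous in $(m_1',m_2')$: the term $m_2(\phi)m_1'$ is a fixed scalar multiple of $m_1'$, and in $m_2'(\phi)m_1$ the coefficient $m_2'(\phi)=\langle m_2',\phi\rangle$ depends $w^\ast$-continuously on $m_2'$ since $\phi\in B^\ast$, so that summand tends to $m_2'(\phi)m_1$ in norm, hence $w^\ast$. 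Therefore the first coordinate is $w^\ast$-continuous exactly when $m_1'\mapsto m_1\cdot m_1'$ is, i.e. when $m_1\in Z(A^{\ast\ast})$, while the second coordinate $m_2'\mapsto m_2\cdot m_2'$ is $w^\ast$-continuous exactly when $m_2\in Z(B^{\ast\ast})$. For the necessity direction I would test continuity only on the ``axes'': feeding nets of the form $(m_{1,i}',0)$ isolates the condition $m_1\in Z(A^{\ast\ast})$ after cancelling the always-continuous term $m_2(\phi)m_{1,i}'$, while nets $(0,m_{2,i}')$ isolate $m_2\in Z(B^{\ast\ast})$ from the second coordinate. Combining both directions yields $Z(A^{\ast\ast}\oplus_\phi B^{\ast\ast})=Z(A^{\ast\ast})\oplus_\phi Z(B^{\ast\ast})$, and then $J$ delivers the stated isomorphism.

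I do not expect a serious obstacle, since the core is a coordinatewise computation. The one point demanding care is the verification that the cross term $m_2'(\phi)m_1$ is genuinely $w^\ast$-continuous in $m_2'$ --- this is where the hypothesis $\phi\in Hom(B,\CC)\subseteq B^\ast$ enters --- together with the observation, used in the necessity argument, that testing on the axes suffices precisely because $\odot_\phi$ mixes the two coordinates only through scalar pairings against $\phi$. Establishing that $J$ is a $w^\ast$-homeomorphism, and not just a norm isomorphism, is the other step that must be made explicit, as the whole reduction rests on it.
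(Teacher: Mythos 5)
Your proposal is correct and follows essentially the same route as the paper: both reduce via the isomorphism $J$ of Proposition \ref{t.6} to computing $Z(A^{\ast\ast}\oplus_\phi B^{\ast\ast})$ directly from the formula for $\odot_\phi$, proving necessity by testing left multiplication on nets supported on one coordinate and sufficiency by observing that the cross terms $m_2(\phi)m_1'$ and $m_2'(\phi)m_1$ are automatically $w^\ast$-continuous. Your explicit verification that $J$ is a $w^\ast$-$w^\ast$ homeomorphism (via the identification of the predual with $A^\ast\times B^\ast$) is a point the paper leaves implicit, but it does not change the substance of the argument.
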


\begin{proof}
To show $"\subseteq "$, by previous proposition,  we may     let
$(m_1, m_2)\in Z\left(A^{\ast\ast}\oplus_\phi B^{\ast\ast}\right)$.
The map
\[\theta:
A^{\ast\ast}\oplus_\phi B^{\ast\ast} \longrightarrow
A^{\ast\ast}\oplus_\phi B^{\ast\ast},
\]
defined by
 \begin{equation}\label{equation.1}\notag
 (m'_1, m'_2)\mapsto  (m_1, m_2)\odot_\phi (m'_1, m'_2),
\end{equation}
 is $w^\ast$ - $w^\ast$ continuous. Let  now $(m'_i,0)\overset{w^\ast}{\longrightarrow}(m'_1,0)$, as $i\rightarrow \infty$.
Then
\begin{align}\notag
       | \langle (m_1,m_2)   & \oplus_\phi (m'_i,0),  (f,g) \rangle -
                    \langle (m_1,m_2)\oplus_\phi (m'_1,0), (f,g)\rangle
      | \\ \notag
   &=|
            \langle \left(m_1\cdot m'_i+m_2(\phi)m'_i,0\right), (f,g)
            \rangle
             -
                          \langle \left(m_1 \cdot m'_1 +m_2(\phi)m'_1,0 \right), (f,g)
              \rangle
    |   \\ \label{equation.2}
  & =| \langle m_1\cdot m'_i,f \rangle -
          \langle m_1\cdot m'_1, f \rangle
           +m_2(\phi) \left(\langle m'_i,f\rangle -\langle m'_1, f\rangle
                     \right)   |,
\end{align}
 and
\begin{align}\notag
\left|\langle m_1\cdot m'_i,f \rangle-\langle m_1\cdot m'_1,f
\rangle\right|\leq &\left|\langle m_1\cdot m'_i,f \rangle -\langle
m_1\cdot  m'_1,f \rangle +m_2(\phi)\left(\langle m'_i,f\rangle
-\langle m'_1, f\rangle \right)\right|\\ \label{equation.3}
&+\left|m_2(\phi)\right|\left|\langle m'_i,f\rangle -\langle m'_1,
f\rangle \right|.
 \end{align}

 Since   $m'_i\overset{w^\ast}{\longrightarrow}m'_1$ (as $i\rightarrow \infty$) and
the inequality  (\ref{equation.2}) holds,  the  inequality
(\ref{equation.3}) implies that   the map $m'_1\mapsto  m_1\cdot
m'_1$ is $w^\ast$-$w^\ast$  continuous on $A^{\ast\ast}$. Likewise
one can verify  that
the map $m'_2\mapsto m_2\cdot m'_2$ is also  $w^\ast$-$w^\ast$
continuous on $B^{\ast\ast}$, hence  $(m_1, m_2)\in Z(A^{\ast
\ast})\oplus_\phi Z(B^{\ast\ast})$.

To show $" \supseteq "$,  let $(m_1, m_2)\in Z(A^{\ast
\ast})\oplus_\phi Z(B^{\ast\ast})$ and consider the map
\begin{equation}\label{t.5}
(m'_1, m'_2) \mapsto (m_1, m_2)\odot_\phi (m'_1, m'_2)
\end{equation}
on $A^{\ast\ast}\oplus_\phi B^{\ast\ast}$. Assuming  $(m'_i,
m''_i)\overset{w^\ast}{ \longrightarrow}(m_1,m_2)$, as $i\rightarrow
\infty$,  and $(f,g)\in A^\ast \times B^\ast$ imply  that
\begin{align}\notag
|\langle(m_1,m_2)\odot_\phi (m'_i,m''_i)-& (m_1,m_2)\odot_\phi
(m'_1,m'_2), (f,g)\rangle|
 \leq \\ \notag
 &\left| \langle m_1\cdot m'_i- m_1\cdot m'_1,f \rangle \right|
+\left| m_2(\phi)\right|
 \left| \langle m'_i- m'_1, f \rangle\right|\\ \label{inequality.1}
 &+ \left|  m_1(f)\right|\left| \langle m''_i-m'_2, \phi \rangle \right|+ \left|\langle m_2\cdot m''_i-
m_2\cdot m'_2,g\rangle \right|.
\end{align}
 Since the maps
$m'_1  \mapsto  m_1\cdot  m'_1 $ and $m'_2 \mapsto  m_2\cdot m'_2$
are $w^\ast$-$w^\ast$ continuous on $A^{\ast\ast}$ and
$B^{\ast\ast}$ respectively, the inequality (\ref{inequality.1})
implies that the map (\ref{t.5}) is $w^\ast$-$w^\ast$ continuous on
$A^{\ast\ast}\oplus_\phi B^{\ast\ast}$. Thence,   $(m_1, m_2)\in
Z\left(A^{\ast\ast}\oplus_\phi B^{\ast\ast}\right)$,  and as a
result the identity map  is  an algebraic isomorphism.
\end{proof}


\begin{proposition}\label{1.33}
\emph{ Let $A$ and $B$ be  Banach algebras. Then   $Hom(A\oplus_{\phi} B, \CC)$ is isomorphic to     $Hom(A, \CC)\times
 \{\phi\}$.}
\end{proposition}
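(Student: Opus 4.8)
The plan is to build an explicit bijection between $Hom(A,\CC)\times\{\phi\}$ and $Hom(A\oplus_\phi B,\CC)$ out of the isomorphism $\eta$ of Proposition \ref{theorem.1.1}. By that proposition every bounded linear functional on $A\oplus_\phi B$ has the form $\eta(\psi_1,\psi_2):(a,b)\mapsto \psi_1(a)+\psi_2(b)$ for a \emph{unique} pair $(\psi_1,\psi_2)\in A^\ast\times B^\ast$. Hence a character of $A\oplus_\phi B$ is completely encoded by such a pair, and the whole problem reduces to deciding exactly which pairs are multiplicative for the product \reff{1.31.1}.

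First I would verify that the candidate map $(\varphi,\phi)\mapsto \eta(\varphi,\phi)$ really lands in $Hom(A\oplus_\phi B,\CC)$. This is a direct substitution: expanding $\eta(\varphi,\phi)\big((a_1,a_2)\cdot_\phi(b_1,b_2)\big)$ by means of \reff{1.31.1} and using only that $\varphi$ and $\phi$ are homomorphisms produces the four terms $\varphi(a_1)\varphi(b_1)$, $\phi(a_2)\varphi(b_1)$, $\varphi(a_1)\phi(b_2)$ and $\phi(a_2)\phi(b_2)$, which regroup precisely as $\big(\varphi(a_1)+\phi(a_2)\big)\big(\varphi(b_1)+\phi(b_2)\big)$. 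Injectivity is then immediate, since $\eta$ is injective (evaluate on $(a,0)$ to recover $\varphi$).

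The substantive direction is surjectivity. Given an arbitrary $\Psi=\eta(\psi_1,\psi_2)\in Hom(A\oplus_\phi B,\CC)$, I would first restrict $\Psi$ to the two subalgebras $A\times\{0\}$ and $\{0\}\times B$ (these are subalgebras, as remarked after Definition \ref{1.31}, and the product \reff{1.31.1} restricts to the ordinary products there); this already forces each of $\psi_1$ and $\psi_2$ to be either a character or identically zero. The decisive relation comes from the mixed products: since $(a,0)\cdot_\phi(0,b)=(\phi(b)\,a,\,0)$, applying multiplicativity of $\Psi$ yields $\phi(b)\psi_1(a)=\psi_1(a)\psi_2(b)$, that is $\big(\phi(b)-\psi_2(b)\big)\psi_1(a)=0$ for all $a\in A$ and $b\in B$.

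The hard part will be extracting $\psi_2=\phi$ from this identity, and this is where I expect the real care to be needed. The relation splits into cases. If $\psi_1\neq 0$ (equivalently, $\Psi$ does not annihilate $A$), then choosing $a$ with $\psi_1(a)\neq 0$ forces $\psi_2=\phi$, and $\psi_1$ is then a nonzero homomorphism, so $\Psi=\eta(\psi_1,\phi)$ lies in the image and matches a pair in $Hom(A,\CC)\times\{\phi\}$. The delicate point is the degenerate branch $\psi_1=0$: such a $\Psi$ restricts to the zero functional on $A$, and one must argue that these are precisely the characters meant to be set aside so that the correspondence is onto $Hom(A,\CC)\times\{\phi\}$ as stated. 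Once that case is disposed of, combining well-definedness, injectivity and surjectivity gives the asserted isomorphism, and I would close by noting that, because $\eta$ is an isometry, the bijection is also a homeomorphism for the respective $w^\ast$-topologies on the two character spaces.
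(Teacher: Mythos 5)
Your construction and the verification of multiplicativity of $(\varphi,\phi)\mapsto\eta(\varphi,\phi)$ coincide with the paper's argument (the paper writes the same functional as $\rho_{(\psi,\phi)}(a_1,a_2)=\psi(a_1)+\phi(a_2)$ and expands the product \reff{1.31.1} exactly as you do), and your route to surjectivity via the mixed product $(a,0)\cdot_{\phi}(0,b)=(\phi(b)a,0)$ is likewise the paper's. The difference is that you have correctly isolated the point where the argument is incomplete, whereas the paper passes over it: the paper simply asserts that for $\theta\in Hom(A\oplus_{\phi}B,\CC)$ there is a \emph{nonzero} $\psi\in Hom(A,\CC)$ with $\theta|_{A}=\psi$, and then chooses $a_1$ with $\psi(a_1)\neq 0$.

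The branch $\psi_1=0$ that you flag cannot in fact be ``disposed of.'' For any $\phi'\in Hom(B,\CC)$ the functional $\Theta(a,b)=\phi'(b)$ \emph{is} a character of $A\oplus_{\phi}B$: by \reff{1.31.1} the second coordinate of $(a_1,a_2)\cdot_{\phi}(b_1,b_2)$ is just $a_2b_2$, so $\Theta\bigl((a_1,a_2)\cdot_{\phi}(b_1,b_2)\bigr)=\phi'(a_2)\phi'(b_2)=\Theta(a_1,a_2)\,\Theta(b_1,b_2)$, and $\Theta\neq 0$. Such a $\Theta$ annihilates $A$ and its second component $\phi'$ is not forced to equal $\phi$, so it lies outside the image of your map whenever $Hom(B,\CC)\neq\{\phi\}$. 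Consequently neither your argument nor the paper's establishes surjectivity as stated; what the argument actually proves is that the characters of $A\oplus_{\phi}B$ \emph{which do not vanish on $A$} correspond bijectively to $Hom(A,\CC)\times\{\phi\}$, the full character set being $\bigl(Hom(A,\CC)\times\{\phi\}\bigr)\cup\bigl(\{0\}\times Hom(B,\CC)\bigr)$. To salvage the stated bijection you must either restrict to characters not annihilating $A$ (which is all that is used later, e.g.\ in Theorem \ref{1.34}) or add a hypothesis forcing $\theta|_{A}\neq 0$. The rest of your proposal (well-definedness, injectivity, the case $\psi_1\neq 0$, the $w^\ast$-homeomorphism remark) is fine.
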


\begin{proof}
 Define $\rho: Hom(A, \CC)\times
\{\phi\}\rightarrow  Hom(A\oplus_{\phi} B, \CC) $ by  $(\psi,\phi) \mapsto \rho_{(\psi,\phi)}$, where
\[\rho_{(\psi,\phi)}(a_1,a_2):=
\psi(a_1)+\phi(a_2),  \hspace{.5in} (a_1,a_2)\in A\oplus_{\phi} B.\]
 The map $\rho_{(\psi,\phi)}$ is
 clearly linear and    bounded, and also multiplicative.  To verify the latter,  let $a=(a_1,a_2)$ and
 $b=(b_1, b_2)$ be in $A\oplus_{\phi} B$. Then
\begin{align}\notag
\rho_{(\psi,\phi)}(a\cdot _\phi b)&=\rho_{(\psi,\phi)} (a_1
b_1+\phi(a_2)b_1+\phi(b_2)a_1,a_2 b_2)\\ \notag &=\psi(a_1b_1)+
\phi(a_2)\psi(b_1)+\phi(b_2)\psi(a_1)+\phi(a_2b_2)\\ \notag
&=\psi(a_1)\psi(b_1)+\phi(a_2)\psi(b_1)+\phi(b_2)\psi(a_1)+\phi(a_2)\phi(b_2)
 \\ \notag
&=\rho_{(\psi,\phi)}(a)\cdot \rho_{(\psi,\phi)}(b).
\end{align}

In order to prove  the converse of the theorem,   let  $\theta \in
Hom(A\oplus_{\phi} B, \CC) $. Obviously $\theta|_{B}={\phi'}$ for
some $\phi'\in Hom(B, \CC)$
 and  there  exists  a $\psi \in Hom(A, \CC)$
  such that $\theta \mid _{A}=\psi$; thus, $\theta(a_1,a_2)=\psi(a_1)+\phi'(a_2)$. Let $a_1\in A$ such that $\psi(a_1)\not=0$.
  Then $\theta((a_1,a_2)\cdot (0, b_2))=\phi(b_2)\psi(a_1)+\phi'(a_2)\phi'(b_2)$, on the other hand, since
  $\theta$ is multiplicative, we have $\theta((a_1,a_2)\cdot (0, b_2))=\psi(a_1)\phi'(b_2)+\phi'(a_2)\phi'(b_2)$,
  which implies that $\phi(b_2)=\phi'(b_2)$,  for every  $b_2\in B$; thence   $\theta
=\rho_{(\psi,\phi)}$.
\end{proof}


\begin{remark}
\emph{
  Let  $\{e_i\}_i$ be   a  bounded left approximate identity   for $B$
such that $\underset{i\rightarrow \infty}{\lim}\;\phi({e_i})=1$. For
every $(a, b)\in A\oplus_\phi B$ we have
\[\|(a, b)-(0, e_i)\cdot(a, b)\|_{\oplus_\phi} = \|\left(\phi({e_i})-1\right)a\|+ \|e_i b-b\|, \]
hence   $\{(0, e_i)\}$ is a  bounded left approximate identity  for
$A\oplus_\phi B$.}
\end{remark}


\begin{theorem}\label{1.34}
\emph{  Let $A$ and $B$ be  Banach algebras and $\varphi
\in
 Hom(A, \CC)$.  Then $A\oplus_{\phi} B$ is
$\rho_{(\varphi,\phi)}$-amenable
 if and only if $ A$ is $\varphi$-amenable.}
\end{theorem}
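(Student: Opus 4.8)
The plan is to prove the biconditional by relating $\varphi$-means on $(A\oplus_\phi B)^\ast$ to $\varphi$-means on $A^\ast$. By Proposition~\ref{1.33}, the character on $A\oplus_\phi B$ is $\rho_{(\varphi,\phi)}$, and by Proposition~\ref{theorem.1.1} we may identify $(A\oplus_\phi B)^\ast$ with $A^\ast\times B^\ast$ and $(A\oplus_\phi B)^{\ast\ast}$ with $A^{\ast\ast}\times B^{\ast\ast}$ (as sets) via the map $J$ of Proposition~\ref{t.6}. Under this identification, the character $\rho_{(\varphi,\phi)}$ corresponds to the pair $(\varphi,\phi)$ in $A^\ast\times B^\ast$. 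So the strategy is: a $\rho_{(\varphi,\phi)}$-mean on $(A\oplus_\phi B)^\ast$ is an element $(m_1,m_2)\in A^{\ast\ast}\times B^{\ast\ast}$, and I want to show its existence is equivalent to the existence of a $\varphi$-mean $m_1$ on $A^\ast$.

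First I would compute the module action explicitly. Writing $\langle (m_1,m_2),(f,g)\rangle=m_1(f)+m_2(g)$ and using the product formula~\reff{1.31.1}, I would derive the formula for $(f,g)\cdot(a,b)$ in $(A\oplus_\phi B)^\ast$; the expected outcome, mirroring the computation $J(m_1,m_2)\cdot(f,g)$ in the proof of Proposition~\ref{t.6}, is that the $A^\ast$-component of $(f,g)\cdot(a,b)$ involves $f\cdot a+\phi(b)f+g(b_1\text{-type terms})$. The point of this step is to translate the defining identity (ii) of a $\rho_{(\varphi,\phi)}$-mean into coupled equations for $m_1$ and $m_2$. I expect the normalization $m_\varphi(\rho_{(\varphi,\phi)})=1$ to read $m_1(\varphi)+m_2(\phi)=1$, and the translation equation, evaluated on elements of the form $(a,0)$ and $(0,b)$, to force $m_1$ to satisfy $m_1(f\cdot a)=\varphi(a)m_1(f)$ together with a consistency condition linking $m_2$.

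For the forward direction ($A\oplus_\phi B$ amenable $\Rightarrow$ $A$ amenable), I would take a $\rho_{(\varphi,\phi)}$-mean $(m_1,m_2)$ and read off from these equations that $m_1$ is a $\varphi$-mean on $A^\ast$, after checking that $m_1(\varphi)\neq 0$ so that $m_1$ can be normalized (this is where the $\phi$-coupling must be handled carefully). For the converse, given a $\varphi$-mean $m_1$ on $A^\ast$ with $m_1(\varphi)=1$, I would construct a $\rho_{(\varphi,\phi)}$-mean by the natural candidate $(m_1,0)$, or a suitable correction thereof, and verify properties (i) and (ii) directly using the module-action formula.

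The hard part will be the bookkeeping in the module action: because the product~\reff{1.31.1} mixes the $A$ and $B$ coordinates through $\phi$, the functional equation for the candidate mean does not decouple cleanly, and I expect the $m_2$-component to be pinned down (or shown to be irrelevant) only after testing against both $(a,0)$ and $(0,b)$. The main obstacle is therefore ensuring that the $\varphi$-mean condition on $A^\ast$ is genuinely equivalent to the $\rho_{(\varphi,\phi)}$-mean condition and not merely implied in one direction; in particular I must confirm that the cross-terms $\phi(b)f$ and $m_1(f)\phi$ appearing in the action do not impose extra constraints on $A$ beyond $\varphi$-amenability. Once the action is computed, both implications should follow by direct substitution, so I anticipate the conceptual content lies entirely in setting up the identification correctly.
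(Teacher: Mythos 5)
Your plan is essentially the paper's own proof: identify $(A\oplus_\phi B)^\ast$ with $A^\ast\times B^\ast$, compute the action $(f,g)\cdot(a,b)=(f\cdot a+\phi(b)f,\,f(a)\phi+g\cdot b)$, take $(m_\varphi,0)$ as the mean in the easy direction, and in the converse extract the first component of a $\rho_{(\varphi,\phi)}$-mean $(m_1,m_2)$ after resolving the coupling --- which the paper does by testing against $(f,0)\cdot(a,0)$ with $f=\varphi$ and $\varphi(a)\neq 0$ to force $m_2(\phi)=0$, hence $m_1(\varphi)=1$. The approach and the anticipated difficulties match the published argument, so this is correct and not a different route.
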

\begin{proof}
Let $A$ be $\varphi$-amenable.
  Then there exists a $m_\varphi\in
A^{\ast\ast}$  such that $m_{\varphi}(\varphi)=1$ and
$m_\varphi(f\cdot a)=\varphi(a)m_\varphi(f)$ for every $f\in A^\ast$
and $a\in A$.
   For any pairs  $(f,g)\in A^{\ast} \times  B^{\ast}$
and $(a,b)\in A\oplus_{\phi} B$, $(f,g)\cdot(a,b)=(f\cdot a+\phi(b)
f,f(a)\phi+g\cdot b)$, and we suppose $\rho_{\varphi,\phi}$ is as in
the preceding proposition. Then
\begin{align}\label{1.34.1}\notag
\langle(m_{\varphi}, 0), (f, g)\cdot(a,b)\rangle &=\langle(m_{\varphi},
0), (f\cdot a+ \phi(b)f, f(a) \phi+g\cdot b)\rangle \\ \notag
 &=\langle m_\varphi,
f\cdot a+\phi(b)f\rangle\\ \notag &=\varphi(a)\langle m_\varphi
,f\rangle + \phi(b) \langle m_\varphi, f \rangle \\ \notag
&=\rho_{(\varphi,\phi)}(a,b)\;m_\varphi(f)\\ \notag
&=\rho_{(\varphi,\phi)}(a,b)\langle (m_\varphi,0), (f,g)\rangle,
\end{align}
and $\langle (m_\varphi,0),\rho_{\varphi,\phi}\rangle=1$; hence,
$A\oplus_{\phi} B$ is $\rho_{(\varphi,\phi)}$-amenable.

To show  the converse,
assume  $(m'_\varphi,m'_\phi)$ is a $\rho_{\varphi,\phi}$-mean on
$\left(A\oplus_{\phi} B\right)^\ast$.
 We shall check that $\langle
m'_\varphi,\varphi\rangle \not=0$.
Since  $(m'_\varphi, m'_\phi)$ is a
$\rho_{\varphi,\phi}$-mean,
$$\langle (m'_\varphi, m'_\phi),
(f,0)\cdot (a,0)\rangle=\varphi(a)m'_\varphi(f),$$ hence
\begin{equation}\label{r.r.r.2}
m'_\varphi(f\cdot a)+f(a )m'_\phi(\phi)=\varphi(a)m'_\varphi(f).
\end{equation}
%
%

Let  $a\in A$ with   $\varphi(a)\not=0$. For $f=\varphi$  the equality (\ref{r.r.r.2}) yields  $m'_\phi(\phi)=0$.
As a result, $ m'_\varphi(f\cdot a) =\varphi(a)m'_\varphi(f)$,
 for every  $f\in A^\ast$ and $a\in A$.   Since   $(m'_\varphi, m'_\phi)(\rho_{\varphi,\phi})=1$,  we may have
  $m'_\varphi(\varphi)=1$ which completes the proof.
 \end{proof}

 We note  that,  according to the   previous  theorem,     the character amenability of $A\oplus_{\phi} B$
 is independent of any amenability condition on  $B$.
\\[-1.cm]

%
  %

 \section{Examples}\label{e.acc.}
 Here  are examples of
   Banach function  algebras for
  which their $\varphi$-amenability relies   on their  characters.
\begin{itemize}
  \item [(i)] Let $A$ be a unital uniform algebra on
a nonempty compact space $\Omega$.  Suppose that $K_A=\{\lambda
\in A': \|\lambda\|=\langle 1,\lambda\rangle =1\}$ and $\Gamma_0(A)$
is
 the set of extremal points of $K_A$.  Then  \cite[4.3.5]{Dales} and   Theorem \ref{Coro.2}
 show that      $x\in \Gamma_0(A)$ if and only if   $A$ is $\varphi_x$-amenable, where $\varphi_x\in \Phi(A)\cong \Omega$.

\item [(ii)] Let $A(\overline{\DD})=\left\{f\in C(\overline{\DD}):
f|_{\DD} \text{ is analytic} \right\}$,
  where $\DD=\left\{z\in \CC: | z| <1\right\}$
is the open unit disc. Then $(A(\overline{\DD}), |\cdot
|_{\overline{\DD}})$
  is a commutative
 Banach algebra where $|f|_{\overline{\DD}}=\sup\{|f(z)|: z\in \overline{\DD}\}$,
 which is called the disc algebra. Since  $\Gamma_0(A(\overline{\DD}))=\TT$, by part
 (i),
$A(\overline{\DD})$ is $\varphi_z$-amenable if and only if  $z\in
\TT$. Observe  that for every $z\in \DD$    there exists a nonzero
continuous point derivation at $z$ on $A(\overline{\DD})$ of  the
form $f\mapsto \alpha f'(z)$ \cite[4.3.13]{Dales}, for some $\alpha\in \CC$; hence,
 $A(\overline{\DD})$ is not weakly amenable \cite[1.5]{Bade.dales.curtes}. 

 \item [(iii)] Let $K$ be a commutative compact hypergroup. Then
$(C(K), \|\cdot \|_\infty)$ is a proper Segal algebra in  $L^1(K)$.
 Since every maximal ideal in  $L^1(K)$ has a bounded  approximate identity, \cite{thesis},  the maximal ideals in  $C(K)$ have only
 unbounded    approximate identities  \cite{BonDun73}.
 However,  the functional
$m_\varphi:=\pi(\varphi)/\|\varphi\|_2^2$ is the  unique
$\varphi$-mean on $C(K)^\ast$, for every $\varphi\in\Phi(C(K))$.

\end{itemize}



\end{document}